\documentclass[12pt]{amsart}
\usepackage{fullpage}
\usepackage[T1]{fontenc}
\usepackage{amsmath,amssymb,amsthm,mathtools,booktabs,array}
\usepackage{needspace}
\usepackage[colorlinks=true,linkcolor=red,urlcolor=blue,citecolor=blue]{hyperref}
\hypersetup{pdftitle={Finite Symplectic Groups on Hyperkahler Manifolds of K3[n]-Type Act Trivially on Discriminant Groups}}

\makeatletter
\def\thm@space@setup{%
  \thm@preskip=2ex \thm@postskip=2ex
}
\makeatother
\numberwithin{equation}{section}
\theoremstyle{plain}
\newtheorem{theorem}{Theorem~}[section]
\newtheorem{proposition}[theorem]{Proposition~}
\newtheorem{lemma}[theorem]{Lemma~}

\theoremstyle{definition}
\newtheorem{definition}{Definition~}[section]
\theoremstyle{remark}
\newtheorem{remark}[theorem]{Remark~}
\newcommand{\Z}{\mathbb Z}\newcommand{\Q}{\mathbb Q}\newcommand{\R}{\mathbb R}
\newcommand{\F}{\mathbb F}
\newcommand{\rk}{\operatorname{rk}}\newcommand{\divi}{\operatorname{div}}
\newcommand{\ord}{\operatorname{ord}}
\newcommand{\im}{\operatorname{im}}
\newcommand{\M}{{\mathrm{II}_{4,20}}}\newcommand{\id}{\mathrm{id}}

\title{Finite Symplectic Automorphisms of Hyperk\"ahler Manifolds of $K3^{[n]}$-Type Act Trivially on Discriminant Groups}
\author{Jie Fu, Shihao Wang, Zhiwei Zheng}
\date{}
\newcommand{\Addresses}{{%
    \bigskip
    \footnotesize

    J.~Fu, \textsc{Qiuzhen college, Tsinghua University, Beijing, China}\par\nopagebreak
    \textit{Email address}: \texttt{fu-j21@mails.tsinghua.edu.cn}

    \medskip

    S.~Wang, \textsc{Qiuzhen college, Tsinghua University, Beijing, China}\par\nopagebreak
    \textit{Email address}: \texttt{wangshihao25@mails.tsinghua.edu.cn}

    \medskip

    Z.~Zheng, \textsc{Yau Mathematical Sciences Center, Tsinghua University, Beijing, China}\par\nopagebreak
    \textit{Email address}: \texttt{zhengzhiwei@mail.tsinghua.edu.cn}
}}

\begin{document}
\begin{abstract}
We give an alternative proof of Mongardi's result that every finite group of symplectic automorphisms of a hyperk\"ahler manifold of $K3^{[n]}$-type acts trivially on the discriminant groups of its second cohomology lattice and its coinvariant lattice. The proof combines sphere-packing bounds, Vinberg's reflection chambers and structure results on even $2$-elementary lattices.
\end{abstract}

\maketitle
\setcounter{tocdepth}{2}
\tableofcontents

\section{Introduction}\label{sec:intro}
The study of symplectic automorphisms of hyperk\"ahler manifolds has been an active area of research in recent years. Let $X$ be a hyperk\"ahler manifold, and let $G\subset\operatorname{Aut}^s(X)$ be a finite group of symplectic automorphisms. Let
\[
 S=S_G(H^2(X,\Z))=(H^2(X,\Z)^G)^\perp
\]
be the coinvariant lattice for the induced action of $G$ on $H^2(X,\Z)$. The lattice $S$ is always negative definite. If, moreover, the induced action of $G$ on the discriminant group $A_{H^2(X,\Z)}$ is trivial, then its action on $A_S$ is also trivial. For $X$ of $K3^{[n]}$-type, Mongardi \cite[Theorem~25]{MR3473636} shows that $S(-1)$, obtained by reversing the sign of the pairing on $S$, embeds primitively into the positive-definite Leech lattice $\mathbb{L}$ as a coinvariant sublattice, so it is one of the $290$ lattices classified by H\"ohn--Mason \cite{MR3438323}. He also conjectures that $\rk(S)+\ell(A_S)<24$ \cite[Conjecture~32]{MR3473636}, where $\ell(A_S)$ is the minimal number of generators of $A_S$. We prove this conjecture in a another paper \cite{FuWangZheng}.

For a hyperk\"ahler manifold $X$ of $K3^{[n]}$-type with $n\ge2$, write $L_n=H^2(X,\Z)$. Then $A_{L_n}\cong \Z/(2n-2)\Z$. Markman \cite[Lemma~9.2]{MR2964480} proved that every monodromy operator, and hence every automorphism of $X$, acts on this group as $\pm\id$. Mongardi \cite[Lemma 24(4)]{MR3473636} stated that any finite symplectic automorphism induces trivial action on the discriminant group of $H^2(X,\Z)$. The argument relies on \cite[Lemma 23]{MR3473636}, but we have been unable to justify the assertion in its proof that $S_g(L_n)$ is negative definite (the subtlety is that $g$ may have infinite order). This paper aims to provide an alternative proof of Mongardi's result:

\begin{theorem}\label{thm:main}
Let $X$ be a hyperk\"ahler manifold of $K3^{[n]}$-type, and let $G\subset\operatorname{Aut}^s(X)$ be a finite subgroup. Then the induced action of $G$ on $A_{H^2(X,\Z)}$ is trivial. This implies that the induced action on $A_{S_G(H^2(X,\Z))}$ is also trivial.
\end{theorem}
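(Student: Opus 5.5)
We argue by contradiction. The abstract points to three tools: sphere packing bounds, Vinberg chambers, and even $2$-elementary lattices. By Markman's result, every $g\in G$ acts on $A_{L_n}\cong\Z/(2n-2)\Z$ as $\pm\id$. The map $G\to\{\pm1\}$ recording this sign is a homomorphism. If it is nontrivial, its kernel $G_0$ has index $2$; pick $g\in G\setminus G_0$, so $g$ acts as $-\id$ on $A_{L_n}$. The case $n=2$ is automatic, since then $A_{L_n}\cong\Z/2\Z$, so assume $n\ge3$.

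Using the standard embedding $L_n\subset\Lambda=\M$ with $L_n^\perp=\langle v\rangle$, $v^2=2n-2$, the fact that $g$ acts as $-1$ on $A_{L_n}$ means that $g$ extends to an isometry $\tilde g$ of $\Lambda$ with $\tilde g(v)=-v$. Since $G$ is finite and symplectic, $T:=L_n^G$ contains a positive $3$-space (spanned by $\sigma$ and a $G$-invariant Kähler class), and $S=S_G(L_n)$ is negative definite. The extended group $\tilde G=\langle G,\tilde g\rangle$ is finite. Its coinvariant lattice $S_{\tilde G}(\Lambda)$ contains $S$ and $v$, and its invariant lattice is still $T$, which is positive of rank $\ge3$. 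Hence $S_{\tilde G}(\Lambda)$ has signature $(1,\rk S)$: it is hyperbolic and contains the positive vector $v$.

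The key step is to localize to the element $g$, or rather to the $2$-part of the problem. Replacing $g$ by a suitable odd power, we may assume $g$ has $2$-power order while still acting as $-1$ on $A_{L_n}$. Consider the anti-invariant lattice $M=\Lambda_{\tilde g=-1}$ of the involution $\tilde g^{\,k}$ (the relevant power of order two, or $\tilde g$ itself if $g$ is an involution). In the involution case, $M$ is an even $2$-elementary lattice of signature $(1,r)$ containing $v$, and $M\cap L_n=S_g(L_n)$ is negative definite. By Nikulin's classification, $M$ is determined by $(r,a,\delta)$. Its Vinberg chamber structure for the roots of norm $-2$ and $-4$, which correspond to wall divisors of $K3^{[n]}$-type, is known explicitly. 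We would then show that $v^\perp\cap M$, which must be negative definite, always contains a wall divisor class in $L_n$ that is orthogonal to an invariant Kähler class. Such a class is not allowed, because the invariant Kähler class would then lie on a wall. This gives the contradiction.

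The main obstacle is the non-involution case. Here $g$ can have order $2^k$ with $k\ge2$, and its action on $v^\perp$ is not simply $-1$. To handle it, I would bound $\rk S\le 20$ and apply a sphere-packing bound (Kabatiansky–Levenshtein or Cohn–Elkies) to the negative-definite lattice $S$. The point is that $S$ has no short vectors (no wall divisors, by the Kähler condition), while its discriminant is controlled by $2n-2$ and the order of $g$. Combining these forces $\det S$ into a range that is incompatible with $S\oplus T$ being an overlattice of index dividing $2n-2$ when $g$ acts as $-1$. Once triviality on $A_{L_n}$ is established, triviality on $A_S$ follows formally. Because $G$ acts trivially on $A_T$, it acts trivially on the glue subgroup $H\subset A_S\oplus A_T$ and on $A_{L_n}=H^\perp/H$, and $A_S$ is generated by the projection of $H$ together with the preimage of $A_{L_n}$.
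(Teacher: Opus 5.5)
\textbf{Verdict: genuine gap.} Your preliminary reductions match the paper's proof: Markman's $\pm\mathrm{id}$, passing to an odd power so that $g$ has $2$-power order, extending to $\tilde g\in O(\M)$ with $\tilde g(v)=-v$, and the formal passage from $A_{L_n}$ to $A_S$. But the whole substance of the theorem is never supplied. That substance is producing a numerical wall divisor in the coinvariant lattice. Equivalently, since $S_{\tilde g}(\M)$ is primitive and contains $v$, it means producing $a\in S_{\tilde g}(\M)\setminus\Q v$ with $a^2\ge-2$ and $|(a,v)|\le v^2/2$.

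\textbf{Involution case.} You only write ``we would then show''. Two further problems:
\begin{itemize}
\item Wall divisors of $K3^{[n]}$-type are not the classes of square $-2$ and $-4$. They are governed by the Mukai-lattice condition of Definition~\ref{def:numerical-wall}, whose relevant squares depend on $n$.
\item Knowing a reflection chamber does not by itself produce such an $a$ for an arbitrary primitive $v$.
\end{itemize}
What makes this case work in the paper is a packing bound applied not to $S$ but to the projection $P$ of $M$ onto $v^\perp$. If there is no short vector, then $\min P\ge4$ and $\det P=2^{\ell(A_M)}/v^2$. Cohn--Elkies then gives the \emph{upper} bound $n-1\le 2^{\ell(A_M)-1}(\delta^{\mathrm{CE}}_{\rk M-1})^2<2$. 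This fails only for four pairs $(\rk M,\ell(A_M))$, which the paper handles with Nikulin's models and Vinberg's chambers for $I_{1,s}$.

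\textbf{Non-involution case.} Your plan cannot work as stated.
\begin{itemize}
\item A packing bound on the negative-definite $S$ with $\min S(-1)\ge4$ yields only a \emph{lower} bound on $|\det S|$. Since $|\det S|$ grows with $v^2$, there is no contradiction for large $n$; this is exactly Remark~\ref{rem:projection}.
\item Your closing incompatibility with the index of $S\oplus T$ in $L_n$ is asserted without any mechanism.
\item Even with the projection trick, for $\ord(\tilde g)=2^k\ge4$ the discriminant of $S_{\tilde g}(\M)$ is only bounded by roughly $(2^k)^{\ell}$. That is far too large for any packing bound to force $n-1<2$.
\end{itemize}

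The missing idea is to look at the order-two power $H_0=\tilde g^{2^{k-1}}$, which \emph{fixes} $v$. Then $C=S_{H_0}(\M)$ is negative definite and lies in $S_{\tilde g}(\M)\cap v^\perp$. It is also rootless, since a root would itself be a short vector. Hence $C$ embeds in the Leech lattice. The classification of Leech involutions, together with covering-radius estimates, then forces $C\simeq E_8(-2)$ and $\ord(\tilde g)\in\{4,8\}$ (Lemma~\ref{lem:E8}). These two orders are excluded by passing to the even unimodular neighbour $K\simeq\mathrm{II}_{4,12}$ (Proposition~\ref{prop:neighbor}). There one applies the nonnegative short-vector theorem (Theorem~\ref{thm:nonnegative}) and the $D_6$, $D_{10}$, $D_4^2$ analysis. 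None of this structure is present in your proposal.
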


\begin{definition}\label{def:short}
Let $N$ be a hyperbolic lattice. We say that $N$ satisfies property~\eqref{eq:W} if, for every primitive vector $v\in N$ with even square $v^2\ge4$, there exists $a\in N$ such that
\begin{equation}\tag{W}\label{eq:W}
 a\notin\Q v,\qquad
 a^2\ge-2,\qquad |(a,v)|\le \frac{v^2}{2}.
\end{equation}
Such an $a$ is called a \emph{short vector} for $(N,v)$.
\end{definition}

We write $\mathrm{II}_{p,q}$ for the even unimodular lattice of signature $(p,q)$, where $p,q\ge1$ and $p\equiv q\pmod{8}$. For a single isometry $g$ of a lattice $L$, write $S_g(L)=(L^g)^\perp_L$.

\begin{theorem}\label{thm:lattice}
Let $F\in O(\mathrm{II}_{4,20})$ have finite order and fix a positive-definite three-dimensional subspace pointwise. If $v$ is primitive, $v^2=2(n-1)\ge4$, and $F(v)=-v$, then $v$ has a short vector in $S_F(\M)$.
\end{theorem}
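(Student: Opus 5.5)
The plan is to work entirely inside the hyperbolic lattice $S:=S_F(\M)$ and to reduce the existence of a short vector to a covering-radius estimate for the orthogonal complement of $v$.

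\textbf{Setup.} Since $F$ fixes a positive-definite $3$-space pointwise and $\M$ has positive index $4$, the coinvariant lattice $S$ has signature $(1,\rk S-1)$. It contains $v$, because $F(v)=-v$ forces $v\perp \M^F$. Hence $S$ is hyperbolic, and $T:=v^\perp_S$ is negative definite of rank $\rk S-1\le 20$. Moreover $T^F=0$, because $S^F=0$.

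\textbf{Reduction to a covering problem.} Choose $u\in S$ with $m:=(u,v)$ a generator of the ideal $(v,S)=\divi(v)\Z$. Adding multiples of $v$ to $u$ changes $m$ by multiples of $v^2$, so we may take any representative $m$ of a class mod $v^2$ with $|m|\le v^2/2$. Write $u=\tfrac{m}{v^2}v+t_0$ with $t_0\in T^\vee$. Every element of $S$ with pairing $m$ against $v$ has the form $u+kv+t$ with $t\in T$, up to the finite index $[S:\Z v\oplus T]$.

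A vector $a=\tfrac{m}{v^2}v+(t_0+t)$ satisfies
\[
a^2=\tfrac{m^2}{v^2}+(t_0+t)^2 .
\]
So it suffices to find a coset $t_0+T\subset T^\vee$, coming from some $m$ with $|m|\le v^2/2$ and $a\notin\Q v$, which contains a vector of norm $-(t_0+t)^2\le 2+\tfrac{m^2}{v^2}$. There are three cases.
\begin{itemize}
\item If $T$ contains a root ($m=0$), that root is a short vector and we are done.
\item If $\rk S=1$, then $S=\Z v$. This cannot happen: $F$ acts as $-1$ on $S$, so $S\cong\langle v^2\rangle$ would have to be $2$-elementary. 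A rank-one lattice $\langle 2(n-1)\rangle$ with $2(n-1)\ge4$ is $2$-elementary only for $n=2$, and that case I would exclude using the $2$-elementary structure results, i.e.\ the relevant even $2$-elementary lattice does not embed as a coinvariant lattice. I would check this separately.
\item Otherwise the task is to show that the covering radius $\mu(T)$ satisfies $\mu(T)^2\le 2+\max_m m^2/v^2$ on the relevant coset, or to exhibit a root.
\end{itemize}

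\textbf{The main step.} The key estimate I expect to be the obstacle is bounding the minimal norm on nonzero cosets of $T$ in $\pi(S)$ when $T$ is root-free. I would approach it in two stages.

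First, the sphere-packing (Minkowski/Blichfeldt) bound gives an upper bound on the minimal norm of a root-free lattice of rank $\le 20$ in terms of $\det T$. Here $\det T$ divides $v^2\det S/\divi(v)^2$, and $\det S$ is controlled because $A_S\cong A_{S^\perp}$ with $\ell(A_S)\le \rk S^\perp=24-\rk S$.

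Second, where the packing bound is insufficient, i.e.\ large $n$ with small-rank $T$, I would use Vinberg's algorithm on $S$. Take a fundamental chamber of the reflection group $W(S)$ whose closure contains $v$. The walls through or near $v$ supply roots $r$. A root $r$ with $|(r,v)|>v^2/2$ can be reflected, via $s_r$ or via $r\mapsto r\pm v$-type adjustments combined with $F$, to decrease $|(r,v)|$. A minimal such root is then short.

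Finally, the remaining cases are those with $F^2$ acting trivially, where $S$ is $2$-elementary. For these I would use Nikulin's classification of even hyperbolic $2$-elementary lattices by invariants $(r,a,\delta)$ to write down a short vector directly, e.g.\ from an $U$ or $U(2)$ summand, or from a root of a $D_4$/$E_8$ summand, projected appropriately relative to $v$.
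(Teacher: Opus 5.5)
\paragraph{Verdict.} The proposal has genuine gaps: the packing step is applied to the wrong lattice, and nothing in it handles $\ord F>2$.

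\paragraph{The packing step.} Your setup is fine, and so is the observation that a root of $T=v^\perp_S$ is a short vector. The packing step, however, goes in the wrong direction. If $(S,v)$ has no short vector then $T$ is root-free. But $|\det T|=v^2|\det S|/\divi(v)^2$ grows with $n$, so a Minkowski/Blichfeldt bound combined with $\min T(-1)\ge4$ only produces a \emph{lower} bound on $v^2$. It can never rule out large $n$.

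The union of your cosets $t_0+T$ over all $m$ is exactly the projection $P$ of $S$ to $v^\perp_\Q$ (with form negated), and packing should be applied to $P$:
\begin{itemize}
\item After shifting a lift $x$ by multiples of $v$ so that $|(x,v)|\le v^2/2$, the no-short-vector hypothesis forces $x^2\le-4$, hence $\min P\ge4$.
\item Meanwhile $\det P=|\det S|/v^2$ \emph{decreases} with $n$.
\end{itemize}
Even this closes only when $|\det S|$ is tiny, namely when $S$ is even $2$-elementary with $\rk S+\ell(A_S)\le24$. That is the involution case; for general $F$, the bound $\ell(A_S)\le24-\rk S$ does not control $|A_S|$. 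In ranks $10$--$15$ it also needs Cohn--Elkies or Rogers--Leech rather than Blichfeldt. And it still leaves the pairs $(\rk,\ell)=(11,11),(12,12),(13,11),(14,10)$. For those, ``writing down'' a vector from a $U$, $U(2)$, $D_4$ or $E_8$ summand is not enough, because $v$ is arbitrary and must first be reduced. The paper does this by reflecting $v$ into Vinberg's finite-volume chambers for $I_{1,d}$, $11\le d\le14$, via a full-rank sublattice $I_{1,s}(2)$.

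\paragraph{Orders above two.} Nothing in your plan handles $\ord F>2$, so ``the remaining cases are those with $F^2$ trivial'' is not earned. The Vinberg step also fails as written. Moving a root $r$ to $r+kv$ so that $(r,v)$ lands in $[-v^2/2,v^2/2]$ gives
\[(r+kv)^2=-2+\bigl(((r,v)+kv^2)^2-(r,v)^2\bigr)/v^2<-2.\]
Walls of a chamber containing $v$ give roots with $(r,v)\ge0$ but no upper bound on $(r,v)$.

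The missing ideas, in order:
\begin{enumerate}
\item Replace $F$ by $F^q$, where $q$ is the odd part of $\ord F$. It still negates $v$, and $S_{F^q}(\M)\subset S_F(\M)$.
\item The involution $H_0=F^{\ord F/2}$ \emph{fixes} $v$. Hence $C=S_{H_0}(\M)$ is negative definite, orthogonal to $v$ inside $S$, and root-free.
\item The Leech embedding lemma and the classification of Leech involutions then give $C(-1)\in\{E_8(2),BW_{16},D_{12}^+(2)\}$. The last two are excluded: $R=C^\perp$ has $R^\vee=\frac12R$, so $v/2$ is a nonzero glue class, and covering radius $\sqrt3$ yields $c$ with $c^2\ge-3$, making $v/2+c$ short.
\item From $C\simeq E_8(-2)$ one gets $\ord F\in\{4,8\}$, and an even unimodular $K\simeq\mathrm{II}_{4,12}$ with induced $g$ of half the order.
\item It then suffices to find $b\in S_g(K)$ with $b^2\ge0$ and $0<(b,v)\le n-1$; such $b$ lifts to a short vector using the covering radius of $E_8$.
\item Finding $b$ uses three further ingredients: the bound $\rk+\ell\le12$ for the anti-invariant lattice of $h=g^{\ord g/2}$; Voronoi-cell/Eichler reductions for the resulting small $2$-elementary lattices; and, for order $8$, the identification $K^{-g^2}(-1)\in\{D_6,D_{10},D_4^2\}$.
\end{enumerate}

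\paragraph{Minor point.} For $v^2\ge4$ the lattice $\langle v^2\rangle$ is never $2$-elementary, so $\rk S=1$ is excluded outright. The case $n=2$ is outside the hypothesis, so there is nothing to check separately.
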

\begin{proof}
Write $\ord(F)=2^kq$ with $q$ odd. Since $F(v)=-v$, we have $k\ge1$, $F^q(v)=-v$, and $S_{F^q}(\M)\subset S_F(\M)$; replace $F$ by $F^q$. If $k=1$, Proposition~\ref{prop:two-elementary} gives a solution of \eqref{eq:W}. Otherwise, suppose there is no short vector. Lemma~\ref{lem:E8} gives $\ord(F)\in\{4,8\}$, contradicted by Propositions~\ref{prop:orderfour} and \ref{prop:remaining}, respectively.
\end{proof}

\begin{remark}
For $n\ge3$, Theorem~\ref{thm:lattice} is equivalent to
\cite[Lemma~23]{MR3473636}.
This follows from Definition~\ref{def:numerical-wall}.
\end{remark}

A lattice $L$ is $2$-elementary if $A_L$ has exponent at most two. For involutions of $\M$, the coinvariant lattice is even and $2$-elementary. The Cohn--Elkies bounds \cite{MR1973059} establish property~\eqref{eq:W} outside four exceptional pairs of invariants, which are treated using Nikulin's classification \cite{MR525944} and Vinberg's reflection chambers \cite{MR295193}. Our use of sphere-packing bounds is inspired by Giovenzana, Grossi, Onorati and Veniani's proof of symplectic rigidity for hyperk\"ahler manifolds of OG10-type \cite{MR4753270}.

For higher two-power orders, we first use the classification of Leech involutions \cite{MR2848589}. Unless this already produces a short vector, unimodular gluing gives an even unimodular lattice $K\simeq\mathrm{II}_{4,12}$ on which $F$ induces an isometry $g$ with $\ord(g)=\ord(F)/2$. We construct suitable vectors in $K$ using the structure of $2$-elementary lattices and lift them to short vectors in $\M$.

\medskip
\noindent\textbf{Acknowledgements.} We thank Giovanni Mongardi for helpful correspondence when this work was nearing completion. We know from him that he had independent, unpublished arguments repairing the gap discussed above. He also emphasized the relevance of \cite{MR4753270}.

\medskip
\noindent \textbf{AI disclosure.} LLM-based AI tools were used in the course of this research to explore proof strategies, assist
in the development and refinement of mathematical arguments, and improve the exposition. The authors take full responsibility for all results and arguments presented in this paper.

\section{Order Two Case}\label{sec:involution}
\subsection{Sphere Packing Method}\label{sec:sphere-packing}
We first recall the Cohn--Elkies sphere-packing bound. For $1\le d\le22$, let $\delta_d^{\mathrm{CE}}$ denote the center-density bound for dimension $d$ in \cite[Table~3]{MR1973059}, increased by $10^{-5}$ to ensure a valid upper bound. Every packing of unit balls in $\R^d$ has center density at most $\delta_d^{\mathrm{CE}}$. In particular, if $P$ is a Euclidean lattice of rank $d$ with $\min(P)\ge4$, where $\min(P):=\min\{(x,x):0\ne x\in P\}$, then
\[
 \frac1{\sqrt{\det P}}\le\delta_d^{\mathrm{CE}}.
\]

Inspired by the sphere-packing argument in \cite[\S\S~2.4 and 3.2]{MR4753270}, the following lemma uses the Cohn--Elkies bounds to study $2$-elementary hyperbolic lattices.

For a finite abelian group $A$, let $\ell(A)$ denote its minimal number of generators.

\begin{lemma}\label{lem:volume}
Let $N$ be an even hyperbolic $2$-elementary lattice with $\rk(N)+\ell(A_N)\le24$. If
\[
(\rk(N),\ell(A_N))\notin
\{(11,11),(12,12),(13,11),(14,10)\},
\]
then $N$ has property~\eqref{eq:W}.
\end{lemma}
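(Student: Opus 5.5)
We argue by contradiction, using a sphere-packing bound on the orthogonal complement of $v$. Fix a primitive $v\in N$ with even $v^2=2m\ge4$, and suppose no short vector exists. Put $r=\rk(N)$, $\ell=\ell(A_N)$, and $d=r-1$. Since $N$ is hyperbolic and $v^2>0$, the lattice $P:=v^\perp_N$ is negative definite of rank $d$; write $P(-1)$ for it with the sign reversed. Since $N$ is $2$-elementary, $|\det N|=2^\ell$.

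\textbf{Step 1: bounding $\min P(-1)$ via projection.} For $a\in N$, write $a=\frac{(a,v)}{2m}v+a'$ with $a'\in v^\perp\otimes\Q$. Using the freedom to add multiples of $v$, we may assume $|(a,v)|\le m=v^2/2$, unless $a\in\Q v$. If no short vector exists, every such $a\notin\Q v$ has $a^2\le-4$ (the lattice is even). Hence
\[
-a'^2\ \ge\ 4+\frac{(a,v)^2}{2m}.
\]
In particular every nonzero $x\in P$ has $-x^2\ge4$, so $\min P(-1)\ge4$.

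\textbf{Step 2: the determinant of $P$.} Standard gluing gives $\det P\cdot v^2=|\det N|\cdot[N:\Z v\oplus P]^2$. The index $[N:\Z v\oplus P]$ equals $2m/\divi(v)$, where $\divi(v)$ divides $2$ because $A_N$ is $2$-elementary. Hence
\[
|\det P|=2^\ell\cdot 2m/\divi(v)^2\ \le\ 2^{\ell+1}m .
\]
This grows with $m$, so the bound from Step 1 alone is too weak when $m$ is large.

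\textbf{Step 3: use the projection lattice instead.} To remove the dependence on $m$, apply the packing bound to the projection $\pi(N)$ of $N$ onto $v^\perp\otimes\Q$. This lattice has determinant
\[
\det\pi(N)=\frac{|\det N|}{v^2}=\frac{2^{\ell-1}}{m}.
\]
By Step 1 its minimum is at least $4$: a vector of $\pi(N)$ arising from $a$ with $(a,v)\equiv0\pmod{2m}$ lies in $P$, and every other one satisfies the displayed inequality. Cohn--Elkies then gives
\[
\sqrt{m}\,2^{(1-\ell)/2}\le\delta^{\mathrm{CE}}_{d}.
\]
Combining this with the $P$-bound $2^{-(\ell+1)/2}m^{-1/2}\le\delta_d^{\mathrm{CE}}$ (after rescaling $P$ by the sharper minimum $4$), or simply taking whichever of the two is stronger for each $m$, yields a contradiction unless $(r,\ell)$ is small. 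The refinement $-a'^2\ge 4+(a,v)^2/2m$ can also be used to rescale by a larger minimum.

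\textbf{Step 4: finite check.} The remaining work is to run over all pairs $(r,\ell)$ allowed by Nikulin's conditions: $\ell\le r$, $r+\ell\le24$, the parity conditions on $(r,\ell)$, and signature $(1,r-1)$. For each pair, check the table of $\delta_d^{\mathrm{CE}}$ and verify that the inequalities from Steps 2 and 3 are incompatible for every $m\ge2$. We expect the survivors to be exactly the four listed pairs.

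The main obstacle is Step 3. The two bounds move in opposite directions in $m$, and the weak point is the middle range of $m$, where neither bound alone suffices. A sharper argument is then needed: one uses a lattice intermediate between $P$ and $\pi(N)$, namely the projection of $\{a:(a,v)\equiv0\bmod k\}$ for a suitable $k$ dividing $2m$, with minimum controlled by Step 1. Choosing $k$ to balance the two bounds, with the index computed via $\divi(v)\in\{1,2\}$, is where care is required.
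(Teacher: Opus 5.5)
Your Step~3 is exactly the paper's argument. The projection of $N$ onto $v^\perp$, with the form negated, has determinant $2^{\ell-1}/m$ and minimum at least $4$, so Cohn--Elkies gives $m\le 2^{\ell-1}(\delta^{\mathrm{CE}}_{r-1})^2$. The gap is that you stop there and misjudge what remains.

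This inequality is a pure upper bound on $m$, so there is no ``middle range'' of $m$ to handle. The bound from $v^\perp_N$ in Step~2 only bounds $m$ from below, so it can never help exclude large $m$. The intermediate lattices you sketch are therefore unnecessary. What is actually needed is to verify that $2^{\ell-1}(\delta^{\mathrm{CE}}_{r-1})^2<2$ for every admissible pair $(r,\ell)$ outside the four exceptions. You never carry this out; ``we expect the survivors to be exactly the four listed pairs'' is not a proof.

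That check needs two inputs you only gesture at.
\begin{itemize}
\item \emph{Parity:} $\ell\equiv r\pmod 2$. The Smith form of a Gram matrix of $N$ has $r-\ell$ entries equal to $1$ and $\ell$ entries equal to $2$, so its rank mod $2$ is $r-\ell$. This rank is even because the Gram matrix of an even lattice is alternating mod $2$.
\item \emph{Range:} $\ell\le\min(r,24-r)$ with $2\le r\le 23$. The case $r=1$ is vacuous, and $r=24$ would force an even unimodular lattice of signature $(1,23)$.
\end{itemize}
Outside the exceptions these give $\ell\le\min(r,24-r)-2$ for $11\le r\le14$. The largest values of $2^{\ell-1}(\delta^{\mathrm{CE}}_{r-1})^2$ then occur at $(10,10)$ and $(12,10)$, about $1.78$ and $1.80$. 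The remaining ranges are maximized at $(15,9)$, $(22,2)$ and $(23,1)$, and stay below $1.54$.

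The margin is thin, and parity is essential. For example, $(12,11)$ is allowed by $\ell\le\min(r,24-r)$ alone but would give about $3.6$. Once this table is in place, Step~3 alone finishes the proof, exactly as in the paper.
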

\begin{proof}
Put $r=\rk(N)$ and $a=\ell(A_N)$. If $r=1$, then $N\simeq\langle2\rangle$, and property~\eqref{eq:W} holds vacuously. The hypothesis gives $r\le24$. If $r=24$, then $a=0$ and $N$ is even unimodular, which is impossible since its signature difference $1-23=-22$ is not divisible by $8$. Thus $2\le r\le23$.

Fix a primitive $v\in N$ with $v^2=2(n-1)\ge4$, and suppose it has no short vector. Let $P$ be the Euclidean lattice whose underlying group is the orthogonal projection of $N$ onto $v^\perp$, equipped with the bilinear form on $N$ rescaled by $-1$. Since $v$ is primitive, it extends to a basis $v,e_2,\ldots,e_{\rk(N)}$ of $N$. The projections of the $e_i$ form a basis of the Euclidean lattice $P$, with Gram matrix
\[
\left(\frac{(e_i,v)(e_j,v)}{v^2}-(e_i,e_j)\right)_{i,j\ge2}.
\]
Replacing each $e_i$ by $e_i-\frac{(e_i,v)}{v^2}v$ makes the Gram matrix block diagonal without changing its determinant. Thus $|\det N|=v^2\det P$, and
\[
\det P=\frac{|\det N|}{v^2}
      =\frac{|A_N|}{v^2}
      =\frac{2^a}{2(n-1)}.
\]
Let $0\ne y\in P$ and choose a lift $x\in N$. Adding an integer multiple of $v$ leaves its projection unchanged and changes $(x,v)$ by multiples of $v^2=2(n-1)$, so we may arrange $|(x,v)|\le n-1$. Since $y\ne0$, we have $x\notin\Q v$. The assumption that there is no short vector gives $x^2<-2$, hence $x^2\le-4$. Thus
\[
\|y\|_P^2=-x^2+\frac{(x,v)^2}{v^2}\ge4,
\]
so $\min(P)\ge4$. For distinct $x,y\in P$, we have $\|x-y\|\geq2$. Thus balls of radius $1$ centered at the lattice points have disjoint interiors. The lattice $P$ has rank $r-1$ and covolume $\sqrt{\det P}$, so this packing has center density $1/\sqrt{\det P}$. The Cohn--Elkies bound in \cite[Table~3]{MR1973059} therefore gives
\[
\sqrt{\frac{2(n-1)}{2^a}}
=\frac1{\sqrt{\det P}}
\le\delta_{r-1}^{\mathrm{CE}}.
\]
Squaring both sides yields $2(n-1)/2^a\le(\delta_{r-1}^{\mathrm{CE}})^2$, and multiplying by $2^{a-1}$ gives
\[
n-1\le 2^{a-1}\bigl(\delta_{r-1}^{\mathrm{CE}}\bigr)^2.
\]
Since $A_N\simeq(\Z/2\Z)^a$, the Smith normal form of a Gram matrix of $N$ has $r-a$ entries equal to $1$ and $a$ entries equal to $2$. Its rank modulo $2$ is therefore $r-a$. As $N$ is even, the Gram matrix modulo $2$ is alternating and has even rank, so $a\equiv r\pmod{2}$. Outside the $4$ excluded pairs, $a\le\min\{r,24-r\}-2$ for $11\le r\le14$, while $a\le\min\{r,24-r\}$ otherwise. The bounds in \cite[Table~3]{MR1973059} give the following estimates for the right-hand side above, omitting the $4$ excluded pairs:
\[
\begin{array}{cccc}
\toprule
\text{range of }r & \text{maximizing }(r,a) & \delta_{r-1}^{\mathrm{CE}} & \text{bound on }n-1\\
\midrule
2\le r\le10 & (10,10) & 0.05901 & <1.79\\
11\le r\le14 & (12,10) & 0.05933 & <1.81\\
15\le r\le21 & (15,9) & 0.07751 & <1.54\\
22 & (22,2) & 0.37390 & <0.28\\
23 & (23,1) & 0.51232 & <0.27\\
\bottomrule
\end{array}
\]
This is a contradiction to $n-1\ge2$.
\end{proof}

\begin{remark}\label{rem:projection}
In \cite[\S~3.2]{MR4753270}, sphere-packing bounds are applied directly to the coinvariant lattice after reversing its sign. Here we adapt and refine this approach by using the projection lattice $P$.

Following the notation of the above proof, set $S=N\cap v^\perp$ and $t=\divi_N(v)$. Write $L(m)$ for $L$ with its bilinear form multiplied by a nonzero integer $m$. The projection lattice $P$ contains $S(-1)$ with index $v^2/t$, and
\[
 |\det S|=\frac{v^2\,2^a}{t^2},\qquad
 \det P=\frac{2^a}{v^2}.
\]
If $v$ has no short vector, then
$\min P\geq4$ and $\min S(-1)\geq4$. A sphere-packing bound on $S(-1)$ gives only a lower bound on $v^2$, while the same bound on $P$ gives the upper bound needed for the contradiction.
\end{remark}

\begin{remark}
Blichfeldt's bound \cite{MR1512555} gives instead
\[
n-1\le\frac{2^a\Gamma((\rk(N)+3)/2)^2}{2(2\pi)^{\rk(N)-1}}.
\]
This already excludes the ranges $2\le\rk(N)\le9$ and $16\le\rk(N)\le23$, but leaves the intermediate ranks unresolved.

The Rogers--Leech bounds \cite{MR102052}, \cite{MR209983} also prove Lemma~\ref{lem:volume}. Using the values in \cite[Table~1]{MR4753270}, increased by $10^{-4}$ to ensure a valid upper bound, the same projection estimate gives $n-1<1.94$ outside the $4$ exceptional pairs.
\end{remark}

\subsection{Vinberg's Inequality}\label{sec:vinberg}
We now recall Vinberg's reflection chambers for $I_{1,d}$, the odd unimodular lattice of signature $(1,d)$. Vinberg proves that these chambers have finite volume for $2\le d\le17$ \cite[\S~4.1]{MR295193}. For $11\le d\le14$, let $\mathcal C_d\subset\R^{1,d}$ be the cone defined by the linear inequalities
\[
 a\ge0,\qquad c_1\ge\cdots\ge c_d\ge0,\qquad
 a\ge c_1+c_2+c_3,\qquad 3a\ge\sum_{i=1}^{11}c_i,
\]
and, when $d=14$, also $4a\ge2c_1+\sum_{i=2}^{14}c_i$.
Vinberg uses the quadratic form $-a^2+\sum_{i=1}^d c_i^2$, whereas we use its negative. In our sign convention, the inequalities above are the wall inequalities of these chambers \cite[\S~4.1, (56) and Table~4]{MR295193}. Since these chambers have finite volume, the cones defined by their wall inequalities lie in the closure of the positive cone \cite[\S~1.1, (13)]{MR295193}. Thus
\begin{equation}\label{eq:vinberg-cone}
 (a;c_1,\ldots,c_d)\in\mathcal C_d
 \quad\Longrightarrow\quad a^2-\sum_{i=1}^d c_i^2\ge0.
\end{equation}

\begin{lemma}\label{lem:fourpairs}
Let $N$ be an even hyperbolic $2$-elementary lattice with
$(\rk N,\ell(A_N))\in\{(11,11),(12,12),(13,11),(14,10)\}$.
Then $N$ has property~\eqref{eq:W}.
\end{lemma}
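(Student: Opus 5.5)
The plan has three steps: identify $N$ explicitly via Nikulin's classification, rewrite $N$ inside a twisted odd unimodular lattice in diagonal coordinates, and then reduce an arbitrary $v$ to Vinberg's chamber $\mathcal C_d$ and read off a short vector from a wall that is not satisfied.

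\textbf{Identifying $N$.} By Nikulin's classification \cite{MR525944}, an even hyperbolic $2$-elementary lattice is determined by $(r,a,\delta)$. For each of the four pairs, the possible values of $\delta$ are very restricted; with $a=r$ or $a$ close to $r$, typically only $\delta=1$ occurs. I would write $N$ explicitly as $M(2)$ or as an index-$2^k$ overlattice of one. For $(11,11)$ and $(12,12)$ we have $a=r$, so $N=M(2)$ for an odd unimodular hyperbolic $M$, namely $N\simeq I_{1,10}(2)$ and $N\simeq I_{1,11}(2)$. For $(13,11)$ and $(14,10)$, $N$ should be the sublattice $\{x\in I_{1,d}: \text{certain parity conditions}\}(2)$. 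Rescaled by $1/2$, these sit between $I_{1,d}$ and an index-$2$ (resp.\ $4$) sublattice, for instance $\langle 1\rangle\oplus\langle -1\rangle^{12}$ with an even-type condition. Concretely, I would present $N(1/2)$ as $\{(a;c)\in I_{1,d}:$ a linear congruence mod $2$ $\}$ with $d=r-1$.

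\textbf{Reducing to the chamber.} For a primitive $v\in N$, with $v^2\ge 4$, I would use reflections of $I_{1,d}$ in roots of norm $-1$ and $-2$ (sign changes, permutations, and the $3$-term and $11$-term walls). These generate $O(I_{1,d})$ up to finite index and, after checking, preserve $N$. This moves $v$ (up to $\pm$) into the fundamental chamber $\mathcal C_d$. One subtlety is that the walls of $\mathcal C_d$ correspond to roots of $I_{1,d}$, which need not all preserve $N$ in the $(13,11)$ and $(14,10)$ cases. There I would instead argue directly: if $v$ violates no wall inequality of a suitable sub-collection, then $v\in\mathcal C_d$.

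\textbf{Producing the short vector (main obstacle).} Suppose $v=(a;c_1,\dots,c_d)$ in $I_{1,d}$-coordinates, with $N$-norm $2(a^2-\sum c_i^2)$. Since $v$ is timelike and not a multiple of a root, a minimizing argument should apply: choose $v$ in its $W$-orbit, where $W$ is the reflection group, with minimal $a$. If the orbit minimum is not in the chamber, some wall root $e$ has $(e,v)<0$. I would then take as candidate short vector a root $e$ of $N$ (so $e^2=-2$ in $N$-scale), or a vector $e$ of norm $-2$ with $|(e,v)|\le v^2/2$. For each wall type (e.g.\ $e=(1;1,1,1,0,\dots)$ of norm $-2$ in $I_{1,d}$, which has norm $-4$ in $N$, too negative), the scale factor $2$ causes trouble. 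So short vectors must come from $N$-vectors of $I_{1,d}$-norm $-1$, $0$, or positive, such as $(0;1,0,\dots)$ or isotropic vectors $(1;1,0,\dots)$.

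My concrete strategy is to exhibit explicit candidate vectors $a$ in terms of the coordinates of $v\in\mathcal C_d$: for instance $e_d=(0;0,\dots,0,1)$, giving $(a,v)=2c_d$, or the isotropic vector $(1;1,0,\dots,0)$, giving $(a,v)=2(a-c_1)$. I then need to show one of them satisfies $|(a,v)|\le a^2-\sum c_i^2$. If all fail, the inequalities $a^2-\sum c_i^2<2c_d$, $<2(a-c_1)$, etc., together with the chamber inequalities, should contradict \eqref{eq:vinberg-cone} applied to a perturbed point such as $(a;c_1,\dots,c_d)$ shifted by a vector in $\mathcal C_d$, giving nonnegativity of a quadratic that is in fact negative. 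The main obstacle is this final bookkeeping: choosing the right finite list of candidate short vectors for each of the four cases, and verifying the resulting inequality contradiction with \eqref{eq:vinberg-cone}, including the parity constraints defining $N$ when $a<r$.
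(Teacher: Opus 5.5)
Your outline has the right ingredients: Nikulin's models, a reflection reduction toward Vinberg's chamber, the root $(0;0,\dots,0,1)$ as a test vector, and \eqref{eq:vinberg-cone}. However, the decisive step is missing, and the mechanism you suggest for it cannot work. After reduction, $v$ itself is meant to lie in $\mathcal C_d$. Since $\mathcal C_d$ is a convex cone inside the closed positive cone, any point obtained from $v$ by adding vectors of $\mathcal C_d$ stays in $\mathcal C_d$ and has nonnegative square. So \eqref{eq:vinberg-cone} in dimension $d$ yields no contradiction.

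The idea you need is to go up one dimension. Write $s=\rk N-1$. If the root $r=(0;0,\dots,0,1)$ is not short, then $2b_s=|(v,r)|>n-1$, hence $b_s\ge n/2$ by integrality. Therefore $\widehat v=(a;b_1,\dots,b_s,n/2)\in\R^{1,s+1}$ still has ordered coordinates and lies in $\mathcal C_{s+1}$. On the other hand, $a^2-\sum_i b_i^2-(n/2)^2=-(n-2)^2/4<0$ for $n\ge3$, which contradicts \eqref{eq:vinberg-cone}. For $s=11,12$, the chamber inequalities of $\widehat v$ are inherited from the reduction. For $s=10$ and $s=13$, the walls of $\mathcal C_{11}$ and $\mathcal C_{14}$ that involve the new coordinate come from the non-shortness of the root $(3;1^{10})$ and of the isotropic vector $(4;2,1^{12})$, respectively. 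Your isotropic candidate $(1;1,0,\dots,0)$ plays no role.

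The reduction step is also unresolved. You note yourself that reflections of $I_{1,d}$ need not preserve $N$ in the cases $(13,11)$ and $(14,10)$, and ``argue directly'' does not fix this. Your description of $N$ there is also inverted. $N(1/2)$ cannot be a sublattice of $I_{1,d}$: that would force $|\det N|\ge2^{\rk N}>2^{\ell(A_N)}$. Instead, $N$ contains $\mathcal F=I_{1,s}(2)$ with index $2$ or $4$. The models are $U\oplus A_1(-1)^{11}$ for $(13,11)$, and $U\oplus D_4(-1)\oplus E_8(-2)$ or $U\oplus D_4(-1)\oplus A_1(-1)^8$ for $(14,10)$; both values of $\delta$ occur in the latter case.

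The paper uses only the sandwich $\mathcal F\subset N\subset\mathcal F^\vee$. Sign changes, permutations, and reflections in $(1;1,1,1,0,\dots)$ and $(3;1^{11},0,\dots)$ (or $(3;1^{10})$ when $s=10$) preserve both $\mathcal F$ and $\mathcal F^\vee$. Each such operation is applied simultaneously to $N$ and $v$, which replaces the pair by an isometric pair with the same sandwich property. Short vectors transport under isometries, and every test vector used afterwards lies in $\mathcal F\subset N$. So no parity description of $N$ is ever needed. The process terminates because the first coordinate $a\in\tfrac12\Z_{>0}$ strictly decreases with each reflection.
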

\begin{proof}
The models \cite[Theorem~3.6.2]{MR525944} for the first three pairs are $I_{1,10}(2)$, $I_{1,11}(2)$, and $U\oplus A_1(-1)^{11}$; the last pair gives $U\oplus D_4(-1)\oplus E_8(-2)$ or $U\oplus D_4(-1)\oplus A_1(-1)^8$. Each model contains a full-rank sublattice $\mathcal F=I_{1,s}(2)$ with $s=\rk N-1\in\{10,11,12,13\}$: this follows from $I_{1,1}(2)\subset U$, $A_1(-1)^4\subset D_4(-1)$, and $\langle2\rangle\oplus E_8(-2)\simeq I_{1,8}(2)$. Since $\mathcal F\subset N\subset\mathcal F^\vee$, a primitive vector of square $2(n-1)\ge4$ has coordinates
\[
 v=(a;b_1,\ldots,b_s),\qquad n-1=a^2-\sum_{i=1}^s b_i^2\in\Z_{\ge2},
 \qquad a,b_i\in\tfrac12\Z.
\]
Replace $v$ by $-v$ if necessary, then change signs and permute the $b_i$ to arrange $a>0$ and $b_1\ge\cdots\ge b_s\ge0$. All sign changes and coordinate permutations are applied simultaneously to $N$ and $v$. Set
\[
 u_1=(1;1,1,1,0^{s-3}),\qquad
 u_2=\begin{cases}(3;1^{10}),&s=10,\\(3;1^{11},0^{s-11}),&s\ge11.\end{cases}
\]
If $(v,u_j)<0$ for $j=1$ or $2$, apply the reflection
\[
 s_{u_j}(x)=x-\frac{2(x,u_j)}{u_j^2}u_j.
\]
Since $u_j^2$ is $-2$ or $-4$ and $(\mathcal F,u_j)\subset2\Z$, these reflections preserve $\mathcal F$ and $\mathcal F^\vee$. They also preserve the component of the positive cone containing $v$, so the first coordinate of $s_{u_j}(v)$ remains positive. It is strictly smaller than $a$: the coefficient $2(v,u_j)/u_j^2$ and the first coordinate of $u_j$ are both positive.

Apply each reflection to both $N$ and $v$, and then apply sign changes and permutations again so that
$b_1\geq\cdots\geq b_s\geq0$. These operations preserve $\mathcal F\subset N\subset\mathcal F^\vee$. The process terminates because $a$ remains a positive half-integer and strictly decreases at each reflection \cite{MR690711}, \cite{MR295193}. At the end of this process, $(v,u_1)$ and $(v,u_2)$ are both nonnegative, which means
\begin{equation}\label{eq:frame-reduced}
 a\ge b_1+b_2+b_3,\qquad
 3a\ge\sum_{i=1}^{\min(s,11)}b_i.
\end{equation}
All transformations are isometries, so a short vector for the resulting pair pulls back to one for the original $(N,v)$.

Suppose that $v$ has no short vector. Consider the root $r=(0;0^{s-1},1)\in\mathcal F\subset N$, then $(v,r)=-2b_s$. Then $2b_s\ge n$, hence $b_s\ge n/2$. Then adding $n/2$ after the last coordinate preserves the coordinate ordering:
\[
 \widehat v=(a;b_1,\ldots,b_s,\frac{n}{2})\in\R^{1,s+1}.
\]
For $s=11,12$, the ordering and \eqref{eq:frame-reduced} give $\widehat v\in\mathcal C_{s+1}$. For $s=10$, the vector $u_2$ has square $-2$ and $(v,u_2)=2(3a-\sum_{i=1}^{10}b_i)\ge0$. Since there is no short vector, we have
\[
 3a-\sum_{i=1}^{10}b_i\ge\frac{n}{2},
\]
which gives the remaining wall inequality for
$\widehat v\in\mathcal C_{11}$.

For $s=13$, take $u=(4;2,1^{12})\in\mathcal F\subset N$. This vector is nonzero and has square $0$, so $u\notin\Q v$ because $v^2>0$. It lies in the closure of the positive cone containing $v$, so $(v,u)>0$. Since $u$ is not short and $(v,u)$ is a positive integer,
\[
 4a-2b_1-\sum_{i=2}^{13}b_i=\frac{(v,u)}{2}\ge\frac{n}{2},
\]
which gives the remaining wall inequality for $\widehat v\in\mathcal C_{14}$. Thus $\widehat v\in\mathcal C_{s+1}$ in every case, and \eqref{eq:vinberg-cone} implies a contradiction to
\[
 a^2-\sum_{i=1}^s b_i^2-\left(\frac{n}{2}\right)^2
 =n-1-\left(\frac{n}{2}\right)^2
 =-\frac{(n-2)^2}{4}<0.
\]
\end{proof}

\begin{proposition}\label{prop:two-elementary}
Let $N$ be an even hyperbolic $2$-elementary lattice with $\rk(N)+\ell(A_N)\le24$. Then $N$ has property~\eqref{eq:W}.

In particular, if $F\in O(\M)$ is an involution fixing a positive-definite three-space pointwise and $S_F(\M)$ is hyperbolic, then $S_F(\M)$ has property~\eqref{eq:W}.
\end{proposition}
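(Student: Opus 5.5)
The first assertion follows by combining the two preceding lemmas. If $(\rk N,\ell(A_N))$ is one of the four pairs $(11,11),(12,12),(13,11),(14,10)$, Lemma~\ref{lem:fourpairs} gives property~\eqref{eq:W}. Otherwise Lemma~\ref{lem:volume} applies directly, since its only hypotheses are that $N$ is even, hyperbolic and $2$-elementary with $\rk(N)+\ell(A_N)\le24$.

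For the second assertion, put $N=S_F(\M)$ and reduce to the first part by checking the three hypotheses.

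\emph{Evenness.} $N$ is a sublattice of the even lattice $\M$, so it is even.

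\emph{$2$-elementarity.} Write $M=\M^F$. Both $M$ and $N$ are primitive sublattices of $\M$, and $N=M^\perp$. For $x\in\M$ we have $2x=(x+Fx)+(x-Fx)$, with $x+Fx\in M$ and $x-Fx\in N$. Hence $2\M\subset M\oplus N$, so $\M/(M\oplus N)$ has exponent at most $2$. Since $\M$ is unimodular, the gluing isomorphism $A_M\cong A_N$ identifies $A_N$ with this quotient. Equivalently, $F$ acts as $-1$ on $N$ and $+1$ on $M$, and trivially on $A_N\cong A_M$. Thus $-1$ acts trivially on $A_N$, which means $2A_N=0$.

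\emph{Rank bound.} The gluing also gives $\ell(A_N)=\ell(A_M)\le\rk M$. Therefore
\[
\rk N+\ell(A_N)\le\rk N+\rk M=24.
\]

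Hyperbolicity of $N$ is assumed in the statement. The hypothesis that $F$ fixes a positive-definite three-space pointwise is needed for consistency: it puts three positive directions into $M$. Since $\M$ has signature $(4,20)$, this leaves at most one positive direction for $N$, so $N$ is either negative definite or hyperbolic. With all hypotheses verified, the first part applies to $N$.

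There is no real obstacle here. The only step that needs care is the standard gluing identification $A_M\cong A_N$ together with the exponent-$2$ argument, and that is routine.
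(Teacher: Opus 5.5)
Your proposal is correct and takes essentially the paper's approach: the first assertion is the case split between Lemma~\ref{lem:fourpairs} and Lemma~\ref{lem:volume}, and the second reduces to it via the unimodular-gluing facts (evenness, $2A_N=0$, and $\ell(A_N)=\ell(A_M)\le\rk M$), which the paper states in one line and you verify explicitly. One small correction: the positive three-space hypothesis is not needed ``for consistency''; once $S_F(\M)$ is assumed hyperbolic, that hypothesis plays no logical role in the argument.
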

\begin{proof}
The first assertion follows from Lemmas~\ref{lem:volume} and \ref{lem:fourpairs}. For the involution case, unimodular gluing makes $N=S_F(\M)$ even and $2$-elementary, with $\rk(N)+\ell(A_N)\le24$, so the first assertion applies.
\end{proof}

\section{Higher Order Cases}\label{sec:higher}
\subsection{The Unimodular Reduction}\label{sec:reduction}
We first recall the Leech embedding lemma.

\begin{lemma}\label{lem:leech-embedding}
Let $S$ be a positive-definite even lattice with no vectors of square $2$, and let $G\subset O(S)$ be a finite subgroup acting trivially on $A_S$ and satisfying $S^G=0$. If $\rk S+\ell(A_S)\le24$, then $S$ admits a primitive embedding into the positive-definite Leech lattice $\mathbb{L}$. Moreover, the action of $G$ extends to $\mathbb{L}$ and fixes $S^\perp_{\mathbb{L}}$ pointwise.
\end{lemma}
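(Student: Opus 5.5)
This is the standard argument of Nikulin's embedding theorem combined with the uniqueness of the Leech lattice among even unimodular positive-definite lattices of rank $24$ without roots. We first find an even unimodular positive-definite lattice $M$ of rank $24$ containing $S$ primitively, with complement $T$ of rank $24-\rk S$ and discriminant form $-q_{A_S}$. We then arrange for $M$ to be root-free, and finally extend the action of $G$ across the gluing.

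\textbf{Step 1 (existence of the complement).} We need an even positive-definite lattice $T$ of rank $t=24-\rk S$ with $q_T\cong -q_S$. By Nikulin's existence theorem (\cite[Theorem~1.10.1]{MR525944}, or its corollary 1.10.2), such a genus exists when $t\ge \ell(A_S)$ and the signature congruence holds. Strict inequality $t>\ell(A_S)$ gives existence unconditionally. At equality, the local conditions at each prime must be checked; the signature condition $\operatorname{sign}(q_S)\equiv \rk S \pmod 8$ is automatic because $S$ is positive definite. Since the statement only assumes $\rk S+\ell(A_S)\le24$, I would follow the usual route: first embed $S$ primitively into an even unimodular lattice of signature $(24,0)$ or $(24+8k, 8k)$ via \cite[Theorem~1.12.2]{MR525944}. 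Alternatively, I would embed $S\oplus$ (something) into $\mathrm{II}_{1,25}$ and pass to a Niemeier lattice. Gluing $S\oplus T$ along an anti-isometry $\gamma\colon A_S\to A_T$ gives a Niemeier lattice $M\supset S$ primitive.

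\textbf{Step 2 (removing roots).} This is the main obstacle, since a Niemeier lattice $M$ may contain roots. I would use the Kondo/Mongardi trick of embedding $S$ into $\mathrm{II}_{1,25}\supset S\oplus U$. Because $S$ has no roots, the vector $S^\perp$ can be chosen in a suitable Weyl chamber so that it meets the Leech-type isotropic vector of $\mathrm{II}_{1,25}$. Concretely, embed $S$ primitively into $\mathrm{II}_{1,25}$ with $S^\perp$ hyperbolic; this is possible since $\rk S+\ell(A_S)\le 24<26-\rk S$ is easily checked. Choose a primitive isotropic $w\in S^\perp$ with $w^\perp/w$ the Leech lattice; such $w$ are the Leech cusps. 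Then $S\subset w^\perp$ and $S\cap \Z w=0$, so $S$ maps isometrically into $w^\perp/w\cong\mathbb{L}$. The image is root-free automatically, and primitivity follows from the primitivity of $S$ in $\mathrm{II}_{1,25}$ together with $w\perp S$. The delicate point is ensuring that $S^\perp$ contains a Leech-type isotropic vector. I would deduce this by taking the Conway chamber for $\mathrm{II}_{1,25}$: since $S$ contains no roots, the fixed hyperbolic subspace $S^\perp\otimes\R$ is not contained in any root hyperplane. Hence it meets the interior of some chamber, whose cusps are Leech cusps, and some cusp lies in $\overline{S^\perp\otimes\R}$ by a rational-points argument.

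\textbf{Step 3 (extending $G$).} Since $G$ acts trivially on $A_S$, the isometry $g\oplus\id_{T}$ of $S\oplus T$ (with $T=S^\perp_{\mathbb L}$) preserves the graph of the gluing $\gamma$, so it extends to $\mathbb L$ and fixes $T$ pointwise. Because $S^G=0$, we get $\mathbb L^G=T$, so $S$ is exactly the coinvariant lattice.
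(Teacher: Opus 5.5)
Your overall route (embed $S$ in $\mathrm{II}_{25,1}$, use a Conway chamber, pass to $w^\perp/\Z w\cong\mathbb L$, extend $G$ by gluing) is the standard Gaberdiel--Hohenegger--Volpato argument. The paper cites that argument rather than reproving it. Your Step~2, however, has two genuine gaps.

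\textbf{First gap: the Leech cusp in $S^\perp$.} You never actually produce one. A hyperbolic subspace meeting the interior of a Conway chamber $\mathcal C$ need not pass through any ideal point of $\mathcal C$, and ``rational points'' do not change this. Moreover, not every ideal point of $\overline{\mathcal C}$ is a Leech cusp. Besides the Weyl vector, the closed chamber contains the isotropic vectors attached to the deep holes of $\mathbb L$, and for these $z^\perp/z$ is a Niemeier lattice with roots.

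Your Step~2 also never uses $G$, yet $G$ is exactly what does this job in the standard argument:
\begin{enumerate}
\item Since $G$ is trivial on $A_S$, it extends to $\mathrm{II}_{25,1}$ by the identity on $S^\perp$.
\item Since $S$ is primitive and rootless, $S^\perp_\R$ lies in no root hyperplane, so it meets the interior of some chamber $\mathcal C$.
\item Each $g\in G$ fixes an interior point of $\mathcal C$, hence $g\mathcal C=\mathcal C$. So $g$ fixes the Weyl vector $w$ of $\mathcal C$, which is the unique vector with $(w,r)=-1$ for all simple roots $r$, since these span.
\item Finally, $S^G=0$ gives $(\mathrm{II}_{25,1})^G=S^\perp$, so $w\in S^\perp$.
\end{enumerate}
Some minor points. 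For positive-definite $S$ the ambient lattice is $\mathrm{II}_{25,1}$, not $\mathrm{II}_{1,25}$. Nikulin's criterion is $\ell(A_S)<26-\rk S$, not $24<26-\rk S$. Step~1, with its unresolved equality case, is not needed at all.

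\textbf{Second gap: primitivity.} Your sentence that primitivity follows from the primitivity of $S$ together with $w\perp S$ is false. The image of $S$ in $w^\perp/\Z w$ is primitive if and only if $S\oplus\Z w$ is primitive in $\mathrm{II}_{25,1}$, that is, if and only if $\divi_{S^\perp}(w)=1$. In general the saturation of the image is $S_G(\mathbb L)$, and it contains the image with index $\divi_{S^\perp}(w)$.

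Here is a counterexample to your implication. Take $\mathbb L\oplus U$ with $U=\langle e,f\rangle$, and let $\lambda\in\mathbb L$ be primitive. Then $\Z(2\lambda+f)$ is primitive and orthogonal to $f$, but its image in $f^\perp/\Z f\cong\mathbb L$ is $\Z\cdot 2\lambda$.

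This is precisely the primitivity issue in the original argument that the paper flags, referring to \cite{MR4931809} and \cite{Zheng} for the repair. So your proof needs a real argument at this point: either show that the Weyl vector obtained has divisibility one in $S^\perp$, or modify the construction. Your Step~3 is fine once a primitive embedding is in hand. Alternatively, $G$ acts on $w^\perp/\Z w$ directly because it fixes $w$.
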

This lemma was originally due to \cite[Appendix~B.2]{MR2955931}. For the primitivity issue in the original argument, see \cite[Example~4.12 and Corollary~4.19]{MR4931809} and \cite[Lemma~1.1]{Zheng}.

Throughout this section, let $F$ and $v$ satisfy the hypotheses of Theorem~\ref{thm:lattice}.

\begin{lemma}\label{lem:E8}
Assume $\ord(F)=2^k$ with $k\ge2$, let $N=S_F(\M)$, and suppose $(N,v)$ has no short vector. Set $H_0=F^{2^{k-1}}$. Then $S_{H_0}(\mathrm{II}_{4,20})\simeq E_8(-2)$ and $\ord(F)\in\{4,8\}$.
\end{lemma}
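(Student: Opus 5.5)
The plan is to study the involution $H_0=F^{2^{k-1}}$, show that its coinvariant lattice is a sign-reversed Leech involution lattice, and use the classification of Leech involutions to pin it down to $E_8(-2)$. The bound on $\ord(F)$ then comes from the action of $F$ on that $E_8(-2)$.

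\emph{Step 1: properties of $H_0$.} Since $F(v)=-v$ and $2^{k-1}$ is even, $H_0(v)=v$. The isometry $H_0$ also fixes the positive-definite three-space fixed by $F$. Because $v\in S_F(\M)$ is orthogonal to that three-space, $H_0$ fixes a positive-definite four-space pointwise. Hence $S_{H_0}(\M)$ is negative definite. Since $H_0$ is an involution of a unimodular lattice, unimodular gluing shows that $S_{H_0}(\M)$ is even and $2$-elementary, with $\rk+\ell\le 24$, and that $H_0=-1$ acts trivially on its discriminant group.

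\emph{Step 2: no roots, hence a Leech involution lattice.} We have $L^F\subset L^{H_0}$, so $S_{H_0}(\M)\subset S_F(\M)=N$. Suppose $r\in S_{H_0}(\M)$ has $r^2=-2$. Then $(r,v)=0$ because $v\in\M^{H_0}$, and $r\notin\Q v$ because $r^2<0<v^2$. Thus $r$ is a short vector for $(N,v)$, contrary to assumption. So $S_{H_0}(\M)(-1)$ is positive definite and has no vectors of square $2$. Lemma~\ref{lem:leech-embedding}, applied with $G=\langle -1\rangle$, embeds it primitively into $\mathbb L$ as the coinvariant lattice of a Leech involution. By the classification of Leech involutions \cite{MR2848589}, such a lattice is $E_8(2)$, $BW_{16}$, or $\mathbb L$ itself. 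The last option is excluded since $\rk\le 20$.

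\emph{Step 3: exclude $BW_{16}$ (the main obstacle).} In this case $M:=\M^{H_0}$ has signature $(4,4)$ and discriminant $(\Z/2\Z)^8$, so $M=U(2)^4$, i.e.\ $M(1/2)\simeq\mathrm{II}_{4,4}$. The vector $v$ lies in $M\cap N=S_{F|_M}(M)$. This is a hyperbolic lattice of rank at most $4$, on which $F$ acts with order dividing $2^{k-1}$. I would first try to produce a short vector inside this small lattice. One route is to apply the sphere-packing projection argument of Lemma~\ref{lem:volume} to $M\cap N$, using that all its squares lie in $4\Z$. Another is to exhibit explicit vectors of square $0$ or $-2$, and small pairing with $v$, in $U(2)^4$ directly. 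If that fails, I would combine vectors of $M\cap N$ with elements of $BW_{16}(-1)$ through the gluing to $\M$, in order to find $a$ with $a^2\ge-2$ and $|(a,v)|\le v^2/2$. I expect this step to require the most work.

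\emph{Step 4: the order bound.} Now $S_{H_0}(\M)\simeq E_8(-2)$ is $F$-stable. On it $F^{2^{k-1}}=H_0$ acts as $-1$, so $F$ restricts to an element of order exactly $2^k$ in $O(E_8(-2))=O(E_8)=W(E_8)$. Every eigenvalue of this restriction is a primitive $2^k$-th root of unity, so its characteristic polynomial is a power of $\Phi_{2^k}$. Since the rank is $8$, this forces $2^k\le 16$. Order $16$ would require an element of $W(E_8)$ with characteristic polynomial $\Phi_{16}$, i.e.\ a regular element of order $16$. But $16$ is not a regular number of $E_8$; equivalently, $W(E_8)$ has no elements of order $16$. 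Hence $\ord(F)\in\{4,8\}$.
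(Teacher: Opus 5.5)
Steps 1, 2 and 4 are essentially right, but there is a genuine gap in the middle of the argument.

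\textbf{Two missing cases.} First, your list of Leech involution lattices is incomplete. Besides $E_8(2)$, $BW_{16}$ and $\mathbb{L}$, the classification also contains $D_{12}^+(2)$, the coinvariant lattice of the trace-zero involution. It has rank $12$, minimum $4$ and $A\cong(\Z/2\Z)^{12}$. It survives your rank bound $\rk S_{H_0}(\M)\le20$, and your proposal never addresses it. Second, Step~3 is not an argument but a list of things you would try. You give no reason why a short vector should lie inside $\M^{H_0}\cap N$. That lattice has signature $(1,s)$ with $s\le4$, so its rank is at most $5$, not $4$. Its determinant is not controlled, so neither the packing bound nor an explicit search in $U(2)^4$ is set up to succeed. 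Since the $BW_{16}$ and $D_{12}^+(2)$ exclusions are exactly what forces $S_{H_0}(\M)\simeq E_8(-2)$, the lemma is not proved.

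\textbf{The missing idea.} Your third route works directly, using the specific vector $v/2$, and it handles $BW_{16}$ and $D_{12}^+(2)$ at once.
\begin{enumerate}
\item Write $C=S_{H_0}(\M)$ and $R=C^\perp_{\M}$. In both cases $A_R\cong A_C\cong(\Z/2\Z)^{\rk R}$.
\item Hence $R^\vee\subset\frac12R$, and both contain $R$ with index $2^{\rk R}$, so $R^\vee=\frac12R$ and $v/2\in R^\vee$.
\item The class $v/2+R$ is nonzero because $v$ is primitive.
\item Let $c\in C^\vee$ represent the class glued to it. Every point of $BW_{16}\otimes\R$ (normalized with minimum $4$) and of $D_{12}^+(2)\otimes\R$ lies within squared distance $3$ of the lattice, so you may choose $c^2\ge-3$.
\item Set $a=v/2+c\in\M$. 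It lies in $N$, since it is in $N\otimes\Q$ and $N$ is primitive.
\item Then $(a,v)=n-1=v^2/2$ and $a^2\ge\frac{n-1}{2}-3\ge-2$.
\item Finally $a\notin\Q v$, because $0\ne c\in v^\perp$. So $a$ is a short vector, a contradiction.
\end{enumerate}

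\textbf{Step 4.} This is a legitimate alternative to the paper's self-contained parity argument, which uses $T=1-u$ and $\det T=\Phi_{16}(1)=2$. However, your justification should be corrected. ``$16$ is not a regular number'' is not equivalent to ``$W(E_8)$ has no element of order $16$'': for example, $7$ is not regular for $E_8$, yet $W(E_8)$ has elements of order $7$. What you need is Springer's bound: the $\zeta_{16}$-eigenspace of any $w\in W(E_8)$ has dimension at most the number of degrees $2,8,12,14,18,20,24,30$ divisible by $16$, which is zero.
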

\begin{proof}
Write $C=S_{H_0}(\M)$ and $R=C^\perp_{\M}$.
Since $H_0(v)=v$, the fixed positive three-space together with $v$ gives a positive four-space fixed by $H_0$. Thus $C$ is negative definite, has rank at most $20$, and lies in $N\cap v^\perp$. This lattice has no vector of square $-2$, since such a vector would satisfy \eqref{eq:W}. Moreover,
\[
 \ell(A_C)=\ell(A_R)\le\rk R=24-\rk C.
\]
Since $H_0$ acts trivially on $A_C$, Lemma~\ref{lem:leech-embedding} applies to $(C(-1),\langle H_0\rangle)$. The classification of Leech involutions \cite[Appendix F, Theorem F.4]{MR2848589} (see also \cite[Table~1]{MR3438323}) gives
\[
 C(-1)\simeq E_8(2),\quad BW_{16},\quad D_{12}^+(2),\quad\mathbb{L}.
\]
Leech lattice $\mathbb{L}$ itself is excluded by rank. For $BW_16$ and $D_{12}^{+}(2)$, $\rk R$ is respectively $8$ and $12$, and their discriminant groups satisfy
\[
 A_R\simeq A_C\simeq(\Z/2\Z)^{\rk R}.
\]
Since $A_R$ is $2$-elementary, $R^\vee\subset\frac12R$. Both $R^\vee$ and $\frac12R$ contain $R$ with index $2^{\rk R}$, so $R^\vee=\frac12R$. Hence $v/2\in R^\vee$ because $v\in R$. Since $v$ is primitive in $\M$, we have $v/2\notin R$, so $v/2+R$ is a nonzero discriminant class.

For $L=BW_{16}$ or $D_{12}^{+}(2)$, every $y\in L_\R$ admits $x\in L$ such that $(y-x)^2\leq3$. For $BW_{16}$, rescale the bilinear form so that $\min BW_{16}=4$; see \cite[proof of Theorem~3.1]{MR1760630}. The covering radius of $D_{12}^+$ is $\sqrt{12/8}$
\cite[\S~III.B]{MR1664114}, hence the covering radius of $D_{12}^+(2)$ is $\sqrt3$. Hence every discriminant class of both lattices has a representative of square at most $3$.

Choose $c\in C^\vee$ with $c^2\geq-3$ such that $c+C$ corresponds to $v/2+R$ under the gluing anti-isometry. Integral gluing gives $a=v/2+c\in N$ with $(a,v)=n-1$. Moreover,
\[
 a^2\ge \frac{n-1}{2}-3\ge-2.
\]
The class $v/2+R$ is nonzero, so $a\notin\Q v$. Thus $a$ is a short vector for $(N,v)$, a contradiction to the standing assumption. Consequently $C\simeq E_8(-2)$.

Rescaling $C$ gives an isometry $u\in O(E_8)$ with $u^{2^{k-1}}=-1$. Its eigenvalues are therefore primitive $2^k$-th roots. Thus its characteristic polynomial is a power of the cyclotomic polynomial $\Phi_{2^k}(t)$, whose degree is $2^{k-1}$. Since $\rk E_8=8$, we have $2^{k-1}\mid8$. The only possibilities are $k=2,3,4$.

Suppose $k=4$. Then the characteristic polynomial is $t^8+1$. Put $T=1-u$; we obtain
\[
 [E_8:TE_8]=\det T=2.
\]
Choose $x\in E_8\setminus TE_8$. Since $E_8/TE_8$ has order $2$, there is $z\in E_8$ with $Tz=2x$. If $z=2y$ for some $y\in E_8$, then $Tz=2x$ gives $x=Ty\in TE_8$, a contradiction. Thus $z\notin2E_8$.

For every $y\in E_8$, the isometry $u$ gives
\[
 (Ty,z)=(y,(1-u^{-1})z)=-2(y,u^{-1}x)\in2\Z.
\]
Thus $TE_8$ is contained in the kernel of the homomorphism $E_8\to\Z/2\Z$, $y\mapsto(y,z)\pmod{2}$. This homomorphism is nonzero: otherwise $z/2\in E_8^\vee=E_8$, a contradiction to $z\notin2E_8$. Its kernel therefore has index $2$. Since $TE_8$ also has index two, it equals this kernel. Our choice $x\notin TE_8$ therefore forces $(x,z)$ to be odd.

On the other hand, $uz=z-2x$ and preservation of the square give
\[
 z^2=(uz)^2=(z-2x)^2=z^2-4(x,z)+4x^2.
\]
Hence $(x,z)=x^2$, which is even because $E_8$ is even. This contradiction excludes $k=4$, so $\ord(F)=4$ or $8$.
\end{proof}

For the rest of this section, we follow the assumptions and notation of Lemma~\ref{lem:E8}. Recall that
\[
 C=S_{H_0}(\mathrm{II}_{4,20})\simeq E_8(-2),
 \qquad R=C^\perp_{\mathrm{II}_{4,20}}.
\]

\begin{definition}\label{def:neighbor}
Consider $\gamma:A_C\to A_R$, the anti-isometry induced by unimodular gluing. The isometry $F$ induces an action on $A_C$, which we also denote by $F$, and put
\[
 W=\{F^{\operatorname{ord}(F)/4}(\alpha)-\alpha:
       \alpha\in A_C\}.
\]
Let $K$ be the overlattice of $R$ defined by
\[
 K=\{x\in R^\vee:x+R\in\gamma(W)\}.
\]
\end{definition}

\begin{proposition}\label{prop:neighbor}\label{sec:gluing}
The lattice $K$ of Definition~\ref{def:neighbor} is an $F$-invariant even unimodular overlattice of $R$:
\begin{equation}\label{eq:neighbor}
 R\subset K\subset R^\vee,\qquad [K:R]=16,\qquad K\simeq \mathrm{II}_{4,12}.
\end{equation}
Let $g=F|_K$. Then $\ord(g)=\ord(F)/2$, and on $A_R$ the involution $h=g^{\ord(F)/4}$ satisfies
\[
 \im(h-1)=\ker(h-1)=K/R.
\]
\end{proposition}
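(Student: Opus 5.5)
The proof works entirely on the discriminant group $A_C=C^\vee/C$. Since $C\simeq E_8(-2)$, we have $C^\vee=\tfrac12 C$ and $A_C\cong C/2C\cong E_8/2E_8$, an $8$-dimensional $\F_2$-space. Its discriminant quadratic form $q(x/2)=x^2/4 \bmod 2\Z$ is, up to sign, the nondegenerate quadratic form $x\mapsto x^2/2 \bmod 2$ on $E_8/2E_8$, which has plus type. The group $F$ acts on $C$ through the isometry $u$ of Lemma~\ref{lem:E8}. Set $m=\ord(F)/4\in\{1,2\}$ and $\sigma=u^{m}$. The eigenvalues of $u$ are primitive $\ord(F)$-th roots of unity, so $\sigma^2=u^{2m}=u^{\ord(F)/2}=-1$ on $E_8$. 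Hence $\sigma$ is an order-four isometry of $E_8$ with $\sigma^2=-1$, and its characteristic polynomial is $(t^2+1)^4$.

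\textbf{Step 1: $W$ is a Lagrangian (totally singular) subspace of $A_C$ of order $16$.}
On $E_8/2E_8$ we have $\sigma^2=-1\equiv1$, so $(\sigma-1)^2=\sigma^2-2\sigma+1\equiv 0 \pmod 2$. Therefore $W=\im(\sigma-1)\subset\ker(\sigma-1)$.

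For the dimension, I would use that $\Z[\sigma]\cong\Z[i]$ and $E_8$ is a torsion-free module over $\Z[i]$, hence free of rank $4$. Then $E_8/2E_8\cong(\Z[i]/2)^4=(\F_2[t]/(t-1)^2)^4$, so $\im(\sigma-1)=\ker(\sigma-1)$ has dimension $4$. Thus $|W|=16$.

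For total singularity, take $x\in E_8$. Since $\sigma$ is an isometry, $(\sigma x-x)^2=2x^2-2(x,\sigma x)$. Moreover $(x,\sigma x)=(\sigma x,\sigma^2x)=-(\sigma x,x)$, so $(x,\sigma x)=0$. Hence $(\sigma x-x)^2=2x^2\equiv 0\pmod 4$. After the rescaling, this says $q$ vanishes on $W$. A totally singular subspace of dimension $4$ in a nondegenerate $8$-dimensional space is Lagrangian.

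\textbf{Step 2: $K$ is even unimodular and $F$-invariant.}
The gluing anti-isometry $\gamma$ carries $W$ to an isotropic subgroup $\gamma(W)\subset A_R$ of order $16$. Since $F$ commutes with $\sigma$, both $W$ and $\gamma(W)$ are $F$-invariant, and $F$ is compatible with $\gamma$. So $K$ is an $F$-invariant even overlattice with $[K:R]=16$. Its determinant is
\[
|\det K|=|A_R|/16^2=2^8/2^8=1,
\]
so $K$ is unimodular. The signature of $R$ is $(4,12)$, so $K\simeq\mathrm{II}_{4,12}$.

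\textbf{Step 3: the involution $h$ and the order of $g$.}
Under $\gamma$, the action of $F^m$ on $A_R$ corresponds to the action of $\sigma$ on $A_C$. Step 1 then gives $\im(h-1)=\ker(h-1)=\gamma(W)=K/R$ on $A_R$, provided we first know $h=g^{m}$ is an involution.

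For the order, note that $F^{2m}=H_0$ acts as the identity on $R$, so $g^{2m}=1$ and $\ord(g)$ divides $\ord(F)/2$. Conversely, $h=F^m|_K$ is not the identity: on $A_R\cong A_C$ it acts as $\sigma\ne1$, because $\im(\sigma-1)=W\neq0$. Since $\ord(F)/2$ is a power of two and $g^{\ord(F)/4}\neq 1$, we get $\ord(g)=\ord(F)/2$.

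The main obstacle is Step 1. It needs the $\Z[i]$-freeness of $E_8$ under $\sigma$ to get the exact dimension $4$ and the equality $\im=\ker$. It also needs the sign and scaling conventions of $q$ on $E_8(-2)$ to be tracked carefully, so that $W$ is genuinely totally singular rather than only totally isotropic for the bilinear form.
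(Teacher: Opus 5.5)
Your proposal is correct and follows essentially the same route as the paper: you identify $A_C$ with $E_8/2E_8$, use $\sigma^2=-1$ to get $(\sigma-1)^2\equiv0$, use $(x,\sigma x)=0$ to show $W$ is totally singular, count determinants for unimodularity, and get the order of $g$ from $F^{\ord(F)/2}=H_0$ acting trivially on $R$ together with $\sigma\not\equiv1\pmod 2$. The only minor difference is the dimension count: you obtain $\dim W=4$ and $\im(\sigma-1)=\ker(\sigma-1)$ from the freeness of $E_8$ as a $\Z[i]$-module, whereas the paper uses $(w-1)^2=-2w$ (so $2E\subset(w-1)E$) and $|\det(w-1)|=2^4$ to compute $[(w-1)E:2E]=2^4$, and then applies rank--nullity.
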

\begin{proof}
Let $d=\ord(F)/2$. We have a natural identification
\[
 C/2C\cong A_C,\qquad c+2C\mapsto c/2+C.
\]
Let $E$ be the underlying group of $C$ with form $(x,y)_E=-\frac12(x,y)_C$; then $E\simeq E_8$. Let $u=F|_E$ and $w=u^{d/2}$. Since $w^2=-1$,
\[
 (w-1)^2=-2w.
\]
Hence $2E\subset(w-1)E$, and taking determinants gives $|\det(w-1)|=2^4$. Under $C/2C=E/2E$, we have $W=(w-1)E/2E$, so
\[
 |W|=[(w-1)E:2E]
 =\frac{[E:2E]}{[E:(w-1)E]}
 =\frac{2^8}{2^4}=2^4.
\]

Next, $w^{-1}=-w$ implies $(wx,x)_E=0$ for every $x\in E$. Thus $((w-1)x)^2_E=2x^2_E$, and the discriminant quadratic form satisfies
\[
 q_C((w-1)x+2E)
 =-\frac{((w-1)x)^2_E}{2}
 =-x^2_E\equiv0\pmod{2\Z}.
\]
Here the last congruence relies on the evenness of $E$. Therefore, $W$ is totally isotropic. Since $\gamma$ is an anti-isometry, $\gamma(W)$ is also totally isotropic, and its inverse image $K\subset R^\vee$ is even. Moreover, $K/R\cong\gamma(W)$ gives $[K:R]=2^4$. Since $|A_R|=|A_C|=2^8$,
\[
 |\det K|=\frac{|\det R|}{[K:R]^2}
 =\frac{2^8}{(2^4)^2}=1.
\]
Thus $K$ is unimodular. Its signature is $(4,12)$, so $K\simeq\mathrm{II}_{4,12}$, giving \eqref{eq:neighbor}.

Since $W$ is $F$-invariant and $\gamma$ is $F$-equivariant, $K$ is $F$-invariant. On $E/2E$, the identity $(w-1)^2=0$ implies $\im(w-1)\subset\ker(w-1)$. Both have dimension $4$, so $\im(w-1)=\ker(w-1)=W$. Transporting this equality by $\gamma$ gives $\im(h-1)=\ker(h-1)=K/R$.

Since $u^d=-1\equiv1\pmod{2}$ but $u^{d/2}=w\not\equiv1\pmod{2}$, the action of $u$ on $A_C$ is of order $d$. The same holds on $A_R$, while $F^d=H_0$ fixes $R$ and hence $K$. Thus $\ord(g)=d$.
\end{proof}

\begin{lemma}\label{lem:lift}
If
\begin{equation}\tag{P}\label{eq:P}
 b\in S_g(K),\qquad b^2\ge0,\qquad 0<(b,v)\le n-1,
\end{equation}
then $(N,v)$ has a short vector.
\end{lemma}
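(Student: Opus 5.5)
The plan is to produce a short vector of the form $a=b+c$, where $c\in C^\vee$ is a glue partner of $b$ chosen with small norm. Since $c$ is orthogonal to $R\supset K$, this gives $a^2=b^2+c^2$ and $(a,v)=(b,v)$.

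First, the setup. Since $H_0(v)=v$, we have $v\in\M^{H_0}=R$. Moreover $\M^F\subset\M^{H_0}=R$, and $\M^F$ is fixed by $g$, so $\M^F\subset K^g$. Now let $b$ satisfy \eqref{eq:P}. As $b\in K\subset R^\vee$, the class $b+R$ lies in $\gamma(W)$, so there is $c\in C^\vee$ with $c+C=\gamma^{-1}(b+R)\in W$. Integral gluing then gives $a:=b+c\in\M$. The vector $b$ is orthogonal to $K^g\supset\M^F$, and $c\perp R\supset\M^F$. Hence $a\perp\M^F$, i.e.\ $a\in N=S_F(\M)$. Also $(a,v)=(b,v)\in(0,n-1]$, because $c\perp v$.

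The main step is to choose the representative $c$ of its class with $c^2\ge-2$. Use the identification $C/2C\cong A_C$ from the proof of Proposition~\ref{prop:neighbor}, and write $c=e/2$ with $e\in E\simeq E_8$. Then $c^2=(e/2)^2_C=-\tfrac12 e_E^2$. Every coset of $2E$ in $E_8$ contains a vector with $e_E^2\le 4$: the cosets are represented by $0$, by roots, and by norm-$4$ vectors. Choosing such an $e$ in the coset gives $c^2\ge-2$. Hence $a^2=b^2+c^2\ge-2$. Nothing about $W$ beyond its being a set of classes is needed here. The point to double-check is exactly this classical fact on minimal norms in $E_8/2E_8$, together with the sign and scaling conventions between $C$ and $E$.

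Finally, we show $a\notin\Q v$. There are two cases.

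If $b\notin R$, then $b+R\ne0$, so $c\notin C$ and in particular $c\ne0$. Since $c\perp v$, $c\neq 0$ and $c$ is the $C_\R$-component of $a$ while $v\in R_\R$, $a$ cannot be a multiple of $v$.

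If $b\in R$, then $c\in C$, and the choice above gives $c=0$, so $a=b$. Suppose $b=\lambda v$. Since $b\in R\subset\M$ and $v$ is primitive in $\M$, $\lambda$ is an integer. But $0<(b,v)=\lambda v^2=2\lambda(n-1)\le n-1$ forces $0<\lambda\le\tfrac12$, a contradiction.

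Thus $a$ is a short vector for $(N,v)$.
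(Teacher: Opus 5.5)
Your proposal is correct and follows essentially the same route as the paper: you glue $b$ to a representative $c=e/2\in C^\vee$ with $e^2_E\le4$, check that $a=b+c\perp\M^F$, and use primitivity of $v$. The paper obtains the same representative from the covering radius of $E_8$, which is equivalent to your description of the cosets of $E_8/2E_8$; your case split for $a\notin\Q v$ is unnecessary, because $a\in\M$ and $0<(a,v)\le n-1<v^2$ already exclude $a\in\Z v=\Q v\cap\M$ in both cases.
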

\begin{proof}
Identify $C=E_8(-2)$, with the roots of $E_8$ having square $2$. Choose $x\in E_8$ so that the class of $x/2\in C^\vee$ maps to $b+R\in A_R$ under the gluing anti-isometry. The covering radius of $E_8$ is $1$ in this normalization \cite[Ch.~21, \S~3.D]{MR1662447}; hence some $y\in E_8$ satisfies $\|x/2-y\|^2\le1$. Thus $c=(x-2y)/2$ represents the same class and has
\[
c^2=-\frac12\|x-2y\|^2\ge-2.
\]
Then $a=b+c\in\M$ has square at least $-2$ and $(a,v)=(b,v)$. Since $C$ has no $F$-fixed vectors, $(K^g)_\Q=(\M^F)_\Q$, so $a\in N$. The primitivity of $v$ in $\M$ implies $\Q v\cap\M=\Z v$; as $0<(a,v)\le n-1<v^2$, we have $a\notin\Q v$.
\end{proof}

The following lemma is a natural continuation of Proposition~\ref{prop:neighbor}. We will use it repeatedly to rule out $\ord(F)=4$ and $8$.

\begin{lemma}\label{lem:fourspace}
Suppose an involution $h\in O(K)$ preserves $R$ and satisfies, on $A_R$,
\[
 \im(h-1)=\ker(h-1)=K/R.
\]
Let $K_-=K^{-h}$ and $K_+=K^h$. Then
\[
 \rk K_- -\ell(A_{K_-})\ge4,\qquad
 \rk K_- +\ell(A_{K_-})\le12.
\]
The map $K_-\to K/R$ is surjective with kernel $K_-\cap R$ containing $2K_{-}^{\vee}$. The kernel of the induced map $K_-/2K_-^\vee\to K/R$ is nondegenerate for the alternating pairing $(x,y)\pmod{2}$. For each of the lattices $K_+$ and $K_-$, $\rk K_\pm-\ell(A_{K_\pm})=4$ implies $K_\pm\cap R=2K_\pm^\vee$.
\end{lemma}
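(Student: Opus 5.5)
The plan is to work throughout with the two lattices $R^\vee\supset K\supset R$ and to exploit that $A_R$ is $2$-elementary, since $A_R\simeq A_C\simeq(\Z/2\Z)^8$. The key device is to use the operators $h-1$ and $h+1$ applied to $R^\vee$ to produce anti-invariant and invariant vectors of $K$. Because $K$ is unimodular and $h$ is an involution, both $K_+$ and $K_-$ are $2$-elementary with $A_{K_+}\simeq A_{K_-}$. The orthogonal projection of $K$ onto $(K_\pm)_\Q$ is exactly $K_\pm^\vee$, so every $y\in K_-^\vee$ has the form $y=(k-hk)/2$ with $k\in K$. For a $2$-elementary lattice $M$ we have $2M^\vee\subset M$ and $|M/2M^\vee|=2^{\rk M-\ell(A_M)}$; this is the counting tool for the inequalities.

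\emph{Surjectivity and the kernel.} For $\alpha\in R^\vee$, the hypothesis $\im(h-1)=K/R$ on $A_R$ gives $(h-1)\alpha\in K$. Since $(h-1)\alpha$ is $h$-anti-invariant, it lies in $K_-$. The classes $(h-1)\alpha+R$ exhaust $\im(h-1)=K/R$, so $K_-\to K/R$ is surjective. For the containment $2K_-^\vee\subset K_-\cap R$, write $2y=(1-h)k$ with $k\in K$. Since $k+R\in K/R=\ker(h-1)$, we get $(1-h)k\in R$. Hence the kernel $K_-\cap R$ contains $2K_-^\vee$, and there is an induced map $K_-/2K_-^\vee\to K/R$ with kernel $(K_-\cap R)/2K_-^\vee$.

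\emph{Nondegeneracy.} The pairing $(x,y)\bmod 2$ is well defined on $K_-/2K_-^\vee$, and it is alternating because $K$ is even. Suppose $x\in K_-\cap R$ pairs evenly with all of $K_-\cap R$; we must show $x/2\in K_-^\vee$, i.e.\ that $(x,z)$ is even for every $z\in K_-$. By surjectivity, write $z=(h-1)\alpha+r$ with $\alpha\in R^\vee$, where $r=z-(h-1)\alpha\in R$ is anti-invariant, so $r\in K_-\cap R$. Then $(x,r)$ is even by assumption. Moreover $(x,(h-1)\alpha)=((h-1)x,\alpha)=-2(x,\alpha)\in2\Z$, because $x\in R$ and $\alpha\in R^\vee$. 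Hence $(x,z)$ is even, as required.

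\emph{Inequalities and the equality case.} Since $K_-/2K_-^\vee$ has order $2^{\rk K_--\ell(A_{K_-})}$ and surjects onto $K/R\simeq(\Z/2\Z)^4$, we get $\rk K_--\ell(A_{K_-})\ge4$. The same argument works for $K_+$, using $h+1$ in place of $h-1$. On the $2$-elementary group $A_R$ we have $h+1=h-1$, so $(h+1)R^\vee\subset K_+$ surjects onto $K/R$. The containment $2K_+^\vee\subset K_+\cap R$ follows by the same computation with $k+hk$. This gives $\rk K_+-\ell(A_{K_+})\ge4$. Combining this with $\ell(A_{K_+})=\ell(A_{K_-})$ and $\rk K_++\rk K_-=16$ yields $\rk K_-+\ell(A_{K_-})\le12$. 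Finally, if $\rk K_\pm-\ell(A_{K_\pm})=4$, the surjection $K_\pm/2K_\pm^\vee\to K/R$ is between groups of order $16$. It is therefore injective, so $K_\pm\cap R=2K_\pm^\vee$.

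The only delicate step is the nondegeneracy. It hinges on decomposing an arbitrary $z\in K_-$ as $(h-1)\alpha$ plus an element of $K_-\cap R$, and on the evenness of $(x,(h-1)\alpha)$; the remaining steps are counting.
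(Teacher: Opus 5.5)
Your proof is correct, and it is organized differently from the paper's.

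\textbf{The paper's route.} It introduces the auxiliary four-dimensional isotropic space $V=2R^\vee/2K\subset K/2K$ with the form $B(\overline w,\overline w')=((h-1)w/2,w')\bmod 2$. It proves $B$ is well defined and nondegenerate using the isomorphism $(h-1):R^\vee/K\to K/R$. It then embeds $V$ into $K_\pm/2K_\pm^\vee$ via $j_\pm(\overline w)=(w\pm hw)/2$, and the inequality $\rk K_\pm-\ell(A_{K_\pm})\ge4$ comes from injectivity of $j_\pm$. Surjectivity and nondegeneracy of the kernel come from duality: $K/R$ is identified with $\operatorname{Hom}(V,\F_2)$, so the kernel is $j_\pm(V)^\perp$, the orthogonal complement of a nondegenerate subspace.

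\textbf{Your route.} You read surjectivity directly off $\im(h-1)=K/R$ via $(h-1)R^\vee\subset K_-$; your $(h-1)\alpha$ is $-j_-(\overline{2\alpha})$. You get $2K_\pm^\vee\subset R$ from $K/R\subset\ker(h\mp1)$, and obtain the inequality by counting the surjection $K_\pm/2K_\pm^\vee\to K/R$. You prove nondegeneracy by a direct radical computation, using $K_-=(h-1)R^\vee+(K_-\cap R)$ together with $\bigl(K_-\cap R,(h-1)R^\vee\bigr)\subset2\Z$.

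\textbf{Comparison.} Your argument avoids constructing $B$, checking its well-definedness, and the duality identification. It also makes clear which hypothesis is used where: the image condition for surjectivity, the kernel condition for the containment, and $2$-elementarity of $A_R$ only to pass from $h-1$ to $h+1$. The paper's route gives, symmetrically for both signs, the orthogonal splitting $K_\pm/2K_\pm^\vee=j_\pm(V)\perp H_\pm$ with $H_\pm$ the kernel. You can recover this as well. The image $J$ of $(h-1)R^\vee$ lies in $H^\perp$, which has dimension $4$ because $H$ is nondegenerate of codimension $4$. Since $J$ surjects onto $K/R$, this forces $J=H^\perp$.
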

\begin{proof}
The space $V=2R^\vee/2K\subset K/2K$ is four-dimensional and totally isotropic. The inverse image of its annihilator with respect to the bilinear form in $K$ is $R$. For $w,w'\in2R^\vee$, set
\[
B(\overline{w},\overline{w}')=\left(\frac{(h-1)w}{2},w'\right)\pmod{2}.
\]
The isomorphism $(h-1):R^\vee/K\to K/R$ and the perfect pairing $K/R\times V\to\F_2$ induced by $(\cdot,\cdot)\pmod{2}$ show that $B$ is well-defined and nondegenerate. Writing $w=2x$, we have $(h-1)x\in K$ and obtain $B(\overline{w},\overline{w})=-((h-1)x)^2=0\pmod{2}$.

Since $(h-1)(w/2)\in K$, the vectors $(w\pm hw)/2$ belong to $K_\pm$, respectively. Replacing $w$ by $w+2k$, with $k\in K$, adds $k\pm hk$ to $(w\pm hw)/2$. Since $(k\pm hk)/2\in K_\pm^\vee$, the resulting classes modulo $2K_\pm^\vee$ are unchanged. Thus there are well-defined maps
\[
 j_\pm:V\longrightarrow K_\pm/2K_\pm^\vee,\qquad
 \overline{w}\longmapsto\frac{w\pm hw}{2}+2K_\pm^\vee.
\]
Their induced forms are $B$ and $-B$, respectively. Since $B$ is nondegenerate, both maps are injective. Unimodular gluing makes $K_\pm$ even and $2$-elementary, with equal discriminant lengths. The quotients $K_\pm/2K_\pm^\vee$ therefore have nondegenerate alternating pairings and are of dimension $\rk K_\pm-\ell(A_{K_\pm})$. The embeddings give $\rk K_\pm-\ell(A_{K_\pm})\geq 4$. Since $\ell(A_{K_+})=\ell(A_{K_-})$ and $\rk K_++\rk K_-=16$, we obtain
\[
 \rk K_-+\ell(A_{K_-})
 =16-\bigl(\rk K_+-\ell(A_{K_+})\bigr)\leq12.
\]
The same argument gives $\rk K_++\ell(A_{K_+})\leq12$.

For $z\in K_\pm$ and $w\in2R^\vee$, we have $\left(z,\frac{w\pm hw}{2}\right)=(z,w)$. The nondegenerate pairing between $K/R$ and $V$ identifies $K/R$ with $\operatorname{Hom}(V,\F_2)$.
Under this identification, the map $K_\pm\to K/R$ sends $x$ to $\xi\mapsto(x,j_\pm(\xi))\bmod2$. It factors through $K_\pm/2K_\pm^\vee$ and is surjective. Its kernel is the orthogonal complement of the nondegenerate subspace $j_\pm(V)$, hence is nondegenerate. When the quotient has dimension $4$, this kernel is zero, giving $K_\pm\cap R=2K_\pm^\vee$.
\end{proof}

\Needspace{12\baselineskip}
\subsection{Nonnegative Short Vectors}
The preceding rank bound leads to the following nonnegative short-vector theorem.

\begin{theorem}\label{thm:nonnegative}
Let $L$ be an even $2$-elementary lattice of signature $(1,r-1)$, with $r+\ell(A_L)\le12$. For every $v\in L$ with $v^2=2(n-1)\ge4$, there exists $b\in L$ with $b^2\ge0$ and $0<(v,b)\le n-1$.
\end{theorem}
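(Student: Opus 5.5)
The plan is to argue by contradiction, in the same spirit as Lemmas~\ref{lem:volume} and~\ref{lem:fourpairs}. Assume no $b$ as in the statement exists and derive a contradiction by reducing $v$ to a normal form.

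First I reduce to primitive $v$. If $v=mw$ with $m\ge2$ and $w$ primitive, then $w^2=v^2/m^2>0$ and $(v,w)=v^2/m\le v^2/2=n-1$, so $b=w$ works. Assume from now on that $v$ is primitive, and replace $v$ by $-v$ if needed so that $v$ lies in a fixed component $\mathcal{P}^+$ of the positive cone.

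The key observation concerns vectors $b\in L$ with $b^2\ge0$ in the closure of $\mathcal{P}^+$, $b\ne0$. For these $(v,b)>0$ automatically, so the only thing to prove is
\[
 \min\{(v,b): b\in\overline{\mathcal{P}^+}\cap L,\ b\ne0\}\le \frac{v^2}{2}.
\]
I will use Nikulin's classification \cite[Theorem~3.6.2]{MR525944} of even hyperbolic $2$-elementary lattices, determined by $(r,a,\delta)$. With $r+a\le12$ the list is finite and explicit. The models are $\langle2\rangle\oplus A_1(-1)^{r-1}$, $U\oplus A_1(-1)^{k}$, $U(2)\oplus A_1(-1)^k$, $U\oplus D_4(-1)\oplus A_1(-1)^k$, $U\oplus E_8(-1)$, $U(2)\oplus E_8(-1)$, $U\oplus D_4(-1)^2$, $U\oplus E_8(-2)$ with $r\le10$, together with a few similar ones. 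As in Lemma~\ref{lem:fourpairs}, each model contains a finite-index frame $\mathcal{F}=I_{1,r-1}(2)$ or $U\oplus(\text{root lattice})$ with $\mathcal{F}\subset L\subset\mathcal{F}^\vee$, so $v$ gets half-integral coordinates.

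Next I reduce $v$ by reflections. I apply reflections in the roots of $L$ (vectors of square $-2$, and vectors of square $-4$ with divisibility $2$), always applying them simultaneously to $L$ and $v$. These preserve $L$ and $\mathcal{P}^+$, and they strictly decrease the first coordinate. After finitely many steps $v$ lies in the fundamental chamber $D$ of the $2$-reflection group. Nikulin's theorem that all these lattices are $2$-reflective when $r+a\le12$ (indeed for $r\le19$ apart from few exceptions) means $D$ is a polyhedron of finite volume. So $D$ is the cone over finitely many vertices $f_1,\dots,f_m\in L$, each isotropic (cusps) or positive, and these vertices can be read off Vinberg's diagrams.

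Write $v=\sum c_if_i$ with $c_i\ge0$. The target inequality then becomes a finite check. Choose a vertex $f$ adjacent to the \emph{smallest} coefficient pattern; concretely, in frame coordinates test $f=(1;1,0,\dots,0)$-type isotropic vectors, or $f=(1;0,\dots,0)$ for $\langle2\rangle\oplus A_1(-1)^{r-1}$. Verify that $v$ in the chamber forces $2(v,f)\le v^2$. As an example, in the $U\oplus M$ models write $v=(x,y,m)$ with $v^2=2xy+m^2$, $m\in M$, and $0<x\le y$. Reduction modulo the Eichler transvections and the Weyl group of $M$ makes $m$ reduced. Then $b=(1,0,0)$ gives $(v,b)=y$, or a better isotropic vector gives $(v,b)=x$, and one needs $x\le xy+m^2/2$. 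This is immediate when $-m^2\le$ the covering bound of $M$; note that $m^2\le0$ since $M$ is negative definite. The other models ($U(2)$ and $I_{1,s}(2)$ types) are handled the same way, using the wall inequalities as in \eqref{eq:frame-reduced}.

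The main obstacle is the models where $-m^2$ can be large compared with $xy$, e.g. $U\oplus E_8(-1)$ or $U\oplus E_8(-2)$. There the bound must use the covering radius of $M$ ($1$ for $E_8$; $\sqrt2$ for $E_8(2)$) to reduce $m$ modulo the translations $m\mapsto m+x\lambda$. For small $x$ this reduction is too weak, and I would first try to treat $x=1,2$ separately by exhibiting explicit $b$ of square $0$ or $2$ built from a vector of $M$ close to $m/x$.
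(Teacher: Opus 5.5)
Your outline stops where the content of the theorem lies. The step that actually produces $b$ after reducing $v$ is never carried out, and the one concrete mechanism you give fails for most of the relevant lattices. In a model $U\oplus M$ with $v=xe+yf+m$ and $0<x\le y$, Eichler reduction puts $m/x$ in the Voronoi cell. So $-m^2\le\mu^2x^2$, where $\mu$ is the covering radius of $M(-1)$, and
\[
 n-1=xy+\tfrac12 m^2\ge x\bigl(y-\tfrac12\mu^2x\bigr).
\]
This forces $x\le n-1$ (so $b=f$ works) only when $\mu^2<2$; if $\mu^2\ge2$ and $y=x$ it says nothing.

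That is the typical situation here. The paper first enlarges $L$ to $L\oplus A_1(-1)^k$ with $r+a=12$ and projects $b$ back to $L$; this would also replace your open-ended list by seven models. One must then treat $I_{1,5}(2)$, $U\oplus A_1(-1)^5$, and $U\oplus Q(-1)$ for $Q=D_4\oplus A_1^2$, $D_6\oplus A_1$, $D_8$, $E_7\oplus A_1$, $E_8\oplus A_1$. Only $E_8\oplus A_1$ has $\mu^2<2$: the other four $Q$ have $\mu^2=2$, and $A_1^5$ has $\mu^2=5/2$. Concretely, in $U\oplus D_8(-1)$ take $x=y=4$ and $m=(2,2,2,2,2,2,1,1)$. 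Then $m/4$ lies in the Voronoi cell, $v^2=6$, $n-1=3$, and $(v,e)=(v,f)=4>n-1$, so neither isotropic vector you propose works.

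The missing idea is to use the vertices of the Voronoi cell, not just its radius. The point $m/x$ lies in the convex hull of finitely many vertices $h$. For these $Q$, every vertex of square $2$ satisfies $h\in Q^\vee$ and $2h\in Q$, so $b=2e+2f+2h$ is an integral isotropic vector. In the borderline case this gives $(v,b)=2\le n-1$; in the example, $h=\tfrac12(1,\dots,1)$. The models $U\oplus A_1(-1)^5$ and $I_{1,5}(2)$ need a further reflection, in $2e+2f+\sum r_i$ and in $(1;1,1,1,0,0)$ respectively, followed by a direct estimate.

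Three further problems:
1. The difficulties you single out are not the real ones. Small $x$ is trivial, since $x\le n-1$ lets you take $b=f$. $E_8$ is the easiest case, because $\mu^2=1$ gives $n-1\ge x^2/2$.
2. Your list includes lattices outside the range: $U\oplus E_8(-2)$ has $r+a=18$ and $U\oplus D_4(-1)^2$ has $r+a=14$.
3. The claim that in the reflection chamber the inequality ``becomes a finite check'' is not substantiated. You neither determine the chambers nor carry out the check.
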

\begin{proof}
If $L_0=L\oplus D$ with $D$ negative definite and $b\in L_0$ satisfies the conclusion for $v\in L$, then its projection $b_L\in L$ satisfies $(v,b_L)=(v,b)$ and $b_L^2\ge b^2$.

Let $a=\ell(A_L)$. Since the Gram form of an even lattice is alternating modulo $2$, $r-a$ is even. Thus
\[
 k=\frac{12-r-a}{2}\in\Z_{\ge0},\qquad
 L^\sharp=L\oplus A_1(-1)^k,
 \qquad \rk L^\sharp+\ell(A_{L^\sharp})=12.
\]
By the projection argument, we may therefore assume $r+a=12$. For $x\in L^\vee$ and $\overline{x}=x+L$, the discriminant quadratic form is $q_L(\overline{x})=x^2\pmod{2\Z}$. Put $\delta=0$ if $q_L$ is integer-valued and $\delta=1$ otherwise. Since $a\le r$, we have $6\le r\le12$; the case $r=12$, $a=0$ is excluded by the signature condition for even unimodular lattices. If $\delta=0$, the existence conditions in \cite[Theorem~3.6.2]{MR525944} leave only $(r,a)=(10,2)$. Thus the block decomposition \cite[\S~9.2]{MR4844629} and indefinite uniqueness \cite[Theorem~3.6.2]{MR525944} give exactly $7$ triples:
\[
 (6,6,1),\ (7,5,1),\ (8,4,1),\ (9,3,1),\
 (10,2,0),\ (10,2,1),\ (11,1,1).
\]
They are represented, in order, by $I_{1,5}(2)$, $U\oplus A_1(-1)^5$, and $U\oplus Q(-1)$ for
\[
 Q=D_4\oplus A_1^2,\ D_6\oplus A_1,\ D_8,\ E_7\oplus A_1,\ E_8\oplus A_1.
\]
Lemmas~\ref{lem:rootmodels} and \ref{lem:exceptional} below prove the assertion for these models.
\end{proof}

We verify the models using Voronoi cells and two explicit reductions.

\begin{definition}\label{def:voronoi}
For a positive-definite lattice $Q$, the Voronoi cell centered at $0$ is defined by the nearest-point condition \cite[Ch.~2, \S~1.2]{MR1662447}:
\[
 \operatorname{Vor}(Q)=\{x\in Q\otimes_{\Z}\R:\|x\|\le\|x-q\|\text{ for all }q\in Q\}.
\]
\end{definition}

\begin{lemma}\label{lem:voronoi}
Let $Q$ be a positive-definite even lattice. Suppose $\operatorname{Vor}(Q)$ is contained in the convex hull of finitely many vectors $h$, each of which either has $h^2\le3/2$ or has $h^2=2$ with $h\in Q^\vee$ and $2h\in Q$.
For every $v\in U\oplus Q(-1)$ with $v^2=2(n-1)\ge4$, there exists $b\in U\oplus Q(-1)$ with $b^2\ge0$ and $0<(v,b)\le n-1$.
\end{lemma}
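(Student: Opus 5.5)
We write $v=(\alpha,\beta,q)$ with $(\alpha,\beta)\in U$ in a standard hyperbolic basis $e,f$ ($e^2=f^2=0$, $(e,f)=1$) and $q\in Q(-1)$, so that $v^2=2\alpha\beta-q^2_Q=2(n-1)$, where $q^2_Q>0$ denotes the positive square in $Q$. The plan is to first use the isometries of $U\oplus Q(-1)$ to put $v$ in a reduced position, and then write down $b$ explicitly.

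\emph{Step 1: reduction.} The transvections (Eichler transformations) $E_{e,y}$ for $y\in Q(-1)$ fix $e$ and send $q\mapsto q+(q,y)e$-type shifts, changing $q$ by multiples $\beta' y$ of the $e$-coefficient. Using them, and replacing $v$ by $-v$ or swapping $e,f$ if needed, we arrange $\beta:=(v,e)>0$ minimal among the positive values of $(v,x)$ for isotropic primitive $x\in U\oplus Q(-1)$. Then $q$ changes by $\beta Q$, so we may take $q/\beta\in\operatorname{Vor}(Q)$. If $\beta\le n-1$ we simply take $b=e$, since $b^2=0$ and $0<(v,e)\le n-1$. So we may assume $\beta\ge n$.

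\emph{Step 2: use the hull vectors.} Since $q/\beta$ lies in the convex hull of the vectors $h$, and the functional $h\mapsto (q,h)_Q$ is linear, some hull vector $h$ satisfies $(q,h)_Q\ge q^2_Q/\beta$. (Indeed, writing $q/\beta=\sum\lambda_h h$ with $\lambda_h\ge0$, $\sum\lambda_h=1$, averaging gives $\sum\lambda_h(q,h)_Q=q_Q^2/\beta$.)

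\begin{itemize}
\item If $h\in Q$, test the vector $b=e+mf+h'$, where $h'$ corresponds to $h$ in $Q(-1)$ and $m$ is an integer chosen so that $b^2=2m-h^2\ge0$. One then computes $(v,b)=\beta m+\alpha-(q,h)_Q$ and compares it with $n-1=\alpha\beta-q^2_Q/2$.
\item If $h\notin Q$, use the scaled vector: either $2h\in Q$ with $h^2=2$, which gives $b=2e+f+2h$-type combinations of square $\ge0$; or, in the case $h^2\le3/2$, use $b$ built from $e$, $f$ and the lattice point nearest to $\beta h$.
\end{itemize}

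The inequality $h^2\le 3/2$ is exactly what should make $0<(v,b)\le n-1$ work out, via the estimate
\[
2(n-1)=2\alpha\beta-q^2_Q\ \ge\ \ldots
\]
combined with $\beta\ge n$ and $\|q/\beta\|^2$ being bounded by the covering radius.

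\emph{Main obstacle.} The hardest step is getting the inequality bookkeeping of Step 2 exactly right, so that the constants $3/2$ and $2$ give $0<(v,b)\le n-1$ without slack. In particular this requires handling the boundary case $h^2=2$, where $2h\in Q$ and $b$ must be chosen with $(b,v)$ possibly even. I would first try the candidate $b=f+\lceil\cdot\rceil e+y$ with $y$ ranging over lattice points near $q/\beta$, and then optimize.
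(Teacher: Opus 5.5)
Your proposal has a genuine gap. Step~2 is where the proof has to happen, and there you only list candidate vectors and defer the inequality check. The candidates you name also do not work.

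\textbf{Missing reduction inequality.} Your reduction claim is too strong. Transvections and the swap $e\leftrightarrow f$ only move $e$ within its orbit, so you cannot make $\beta=(v,e)$ minimal over \emph{all} primitive isotropic vectors. That orbit misses isotropic vectors of divisibility $2$, such as $2e+2f+2h'$ with $h\notin Q$, and these are exactly the vectors the lemma ends up needing. What descent on the positive integer $(v,e)$ actually gives is
\[
\alpha=(v,f)\ge\beta=(v,e)\quad\text{and}\quad q/\beta\in\operatorname{Vor}(Q),
\]
and Step~2 must use the first inequality. Since every hull vector has $h^2\le2$, you get $\|q\|^2\le2\beta^2$. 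Hence $n-1=\alpha\beta-\|q\|^2/2\ge\beta(\alpha-\beta)$, and the remaining case $\beta\ge n$ forces $\alpha=\beta$. Without this, no estimate comparing $(v,b)$ with $n-1$ closes.

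\textbf{Wrong case split and wrong test vector.} With $\alpha=\beta$, the support inequality $(q,h)_Q\ge\|q\|^2/\beta$ should be split by $h^2\le3/2$ versus $h^2=2$, not by $h\in Q$ versus $h\notin Q$. Since $Q$ is even, a nonzero $h\in Q$ has $h^2=2$ anyway.
\begin{itemize}
\item If $h^2\le3/2$, Cauchy--Schwarz gives $\|q\|^2\le\frac32\beta^2$, so $n-1\ge\beta^2/4$. This contradicts $n-1<\beta$ for $\beta\ge3$, using integrality when $\beta=3$. So this case cannot occur, and no vector near $\beta h$ needs to be built.
\item If $h^2=2$, then $\|q\|^2<2\beta^2$ because $n-1>0$. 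So $\delta=2\beta-(q,h)_Q$ is a positive integer, using $h\in Q^\vee$. From $\|q\|^2\le\beta(q,h)_Q$ you get $n-1\ge\beta\delta/2$, which forces $\delta=1$.
\end{itemize}
In the second case the right test vector is $b=2e+2f+2h'$, which is integral because $2h\in Q$. It has $b^2=8-4h^2=0$ and $(v,b)=4\beta-2(q,h)_Q=2\delta=2\le n-1$. Your candidate $2e+f+2h'$ has $b^2=4-4h^2=-4<0$, so it fails.
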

\begin{proof}
Use $\|z\|^2$ and $(z,h)_Q$ for the norm and pairing in the positive-definite lattice $Q$. Choose a hyperbolic basis $e,f$ of $U$ so that $v=\alpha e+\beta f+z$ with $\alpha,\beta>0$ and
\[
 n-1=\alpha\beta-\frac12\|z\|^2.
\]
For $t\in Q$, the Eichler translation fixing $f$ is the isometry of $U\oplus Q(-1)$ given by
\[
 \alpha e+\beta f+z
 \longmapsto
 \alpha e+
 \left(\beta+(z,t)_Q+\frac{\alpha}{2}\|t\|^2\right)f
 +z+\alpha t,
\]
where $(\, ,\,)_Q$ is the pairing on $Q$. Choosing $t$ appropriately, we may assume $z/\alpha\in\operatorname{Vor}(Q)$. Exchanging $e,f$ whenever $\beta<\alpha$ strictly decreases the positive integer $\alpha$. Thus we may assume $\beta\ge\alpha$ and $z/\alpha\in\operatorname{Vor}(Q)$.
If $\alpha\le n-1$, take $b=f$. Otherwise $\alpha>n-1\ge2$. The norm bound $\|z\|^2\le2\alpha^2$ gives $n-1\ge\alpha(\beta-\alpha)$, hence $\beta=\alpha$.

Because $z/\alpha$ lies in this convex hull, one of the specified vectors $h$ satisfies $(z,h)_Q\ge\|z\|^2/\alpha$. If $h^2\le3/2$, Cauchy's inequality gives $\|z\|^2\le3\alpha^2/2$ and $n-1\ge\alpha^2/4$. For integers $\alpha\ge3$, this contradicts $n-1<\alpha$, using integrality when $\alpha=3$.

Otherwise $h^2=2$. Since $(z,h)_Q\le\sqrt2\|z\|<2\alpha$, the integer $\delta=2\alpha-(z,h)_Q$ is positive. The support inequality gives $n-1\ge\alpha\delta/2$, so $\delta=1$. The integral vector $b=2e+2f+2h\in U\oplus Q(-1)$ is isotropic and has $(v,b)=2\delta=2\le n-1$.
\end{proof}

\begin{lemma}\label{lem:rootmodels}
Let $Q$ be any of the lattices
\[
 D_4\oplus A_1^2,\quad D_6\oplus A_1,\quad D_8,\quad
 E_7\oplus A_1,\quad E_8\oplus A_1.
\]
For every $v\in U\oplus Q(-1)$ with $v^2=2(n-1)\ge4$, there exists $b\in U\oplus Q(-1)$ with $b^2\ge0$ and $0<(v,b)\le n-1$.
\end{lemma}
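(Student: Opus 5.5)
The plan is to deduce the statement from Lemma~\ref{lem:voronoi}. For each of the five lattices $Q$ I will exhibit a finite set of vectors whose convex hull contains $\operatorname{Vor}(Q)$, each vector satisfying $h^2\le 3/2$, or else $h^2=2$ with $h\in Q^\vee$ and $2h\in Q$. Since $\operatorname{Vor}(Q_1\oplus Q_2)=\operatorname{Vor}(Q_1)\times\operatorname{Vor}(Q_2)$, it is natural to work with the vertices of the Voronoi cells. For a product, the vertices are the sums $h_1+h_2$ of vertices of the factors, and their norms add.

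\textbf{Ingredients.} The Voronoi cell of $A_1$ (root of norm $2$) is the segment $[-r/2,r/2]$, with vertex norm $1/2$. For $D_m$ the vertices are the deep holes $(\pm\tfrac12)^m$, of norm $m/4$, and the shallow holes $(\pm1,0^{m-1})$, of norm $1$ \cite{MR1662447}. For $E_7$ the vertex norms are $3/2$ (deep holes, in the class of $E_7^\vee/E_7$) and $7/6$ (shallow). For $E_8$ the vertex norms are $1$ (deep holes) and $8/9$.

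\textbf{Checking the five cases.}
\begin{itemize}
\item $D_4\oplus A_1^2$: the vertex norms are at most $1+\tfrac12+\tfrac12=2$. Any vertex of norm exceeding $3/2$ has both $A_1$ components equal to $\pm r/2$ and a $D_4$ component of norm $1$. That $D_4$ component is either a deep hole $(\pm\tfrac12)^4$ or a shallow hole $(\pm1,0^3)$. In both cases $h\in Q^\vee$ with $h^2=2$, and $2h\in Q$, because $2\cdot(\tfrac12)^4\in D_4$, $2(\pm1,0^3)\in D_4$, and $r\in A_1$.
\item $D_6\oplus A_1$: the worst norms are $6/4+1/2=2$ and $1+1/2=3/2$. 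At norm $2$ we get $h=((\pm\tfrac12)^6,\pm r/2)$, which lies in $Q^\vee$, and $2h\in D_6\oplus A_1$ since $(\pm1)^6$ has even coordinate sum.
\item $D_8$: the deep holes $(\pm\tfrac12)^8$ have norm $2$, lie in $D_8^\vee$, and have double $(\pm1)^8\in D_8$. The shallow holes have norm $1$.
\item $E_7\oplus A_1$: the maximal vertex norm is $3/2+1/2=2$. Such a vertex is a deep hole of $E_7$, a minimal vector of $E_7^\vee$ in the nontrivial coset, plus $\pm r/2$. Twice a class of order $2$ in $E_7^\vee/E_7$ lies in $E_7$, so $2h\in Q$. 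The other vertices have norm at most $7/6+1/2<2$, but this exceeds $3/2$.
\item $E_8\oplus A_1$: the vertex norms are $1+\tfrac12=3/2$ and $8/9+1/2$, all at most $3/2$.
\end{itemize}

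\textbf{Main obstacle.} The difficulty is the intermediate vertices whose norm lies strictly between $3/2$ and $2$, notably $7/6+1/2=5/3$ for $E_7\oplus A_1$. These are not in $Q^\vee$ and so do not meet the hypothesis of Lemma~\ref{lem:voronoi} as stated. First I would try to avoid listing vertices and instead cover $\operatorname{Vor}(Q)$ by a larger polytope with allowed vertices. For $E_7\oplus A_1$ the Voronoi cell of $E_7$ is contained in the convex hull of its deep holes together with points of norm at most $3/2-1/2$ scaled appropriately. Alternatively, one can rerun the support-inequality argument of Lemma~\ref{lem:voronoi} directly: with $\|z\|^2\le 5\alpha^2/3$ one gets $n-1\ge \alpha^2/6$, which together with $n-1<\alpha$ forces $\alpha\le5$. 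The finitely many remaining small $(\alpha,z)$ would then be handled by hand, exhibiting $b=f$ or an isotropic vector such as $e+f+y$ with $y\in Q$ a root near $z/\alpha$. I expect this careful treatment of the $E_7$ shallow-hole vertices, and verifying the exact Voronoi vertex data, to be the main work.
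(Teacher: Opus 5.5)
You follow the paper's strategy: apply Lemma~\ref{lem:voronoi} with a finite set of points whose convex hull contains $\operatorname{Vor}(Q)$, using that Voronoi cells and vertex norms are additive under orthogonal sums. The paper uses the Weyl images of the alcove vertices $\omega_i/c_i$, a set that covers the cell, so it never has to decide which points are genuine holes. You use the actual Voronoi vertices, which would work equally well. Your data for $A_1$, $D_m$ and $E_8$ are correct, and so are your checks for $D_4\oplus A_1^2$, $D_6\oplus A_1$, $D_8$ and $E_8\oplus A_1$.

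As written, however, the proposal does not prove the $E_7\oplus A_1$ case. You assert that $E_7$ has shallow holes of norm $7/6$ and deduce that $E_7\oplus A_1$ has vertices of norm $5/3$ violating the hypothesis of Lemma~\ref{lem:voronoi}. You then leave this open, offering only a vague covering idea or a finite check ``by hand'' for $\alpha\le5$ that you never carry out.

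The value $7/6$ is wrong, and with the correct data the obstacle disappears. In Bourbaki numbering the highest root of $E_7$ is $\theta=2\alpha_1+2\alpha_2+3\alpha_3+4\alpha_4+3\alpha_5+2\alpha_6+\alpha_7$. The alcove vertices $\omega_i/c_i$ then have norms $1/2,\ 7/8,\ 2/3,\ 3/4,\ 5/6,\ 1,\ 3/2$. The dominant part of $\operatorname{Vor}(E_7)$ satisfies $(x,\theta)\le1$, so it lies in the simplex on $0$ and the $\omega_i/c_i$. Hence $\operatorname{Vor}(E_7)$ lies in the convex hull of the Weyl images of these points. The only such points of norm above $1$ are the images of the minuscule weight $\omega_7$, namely the deep holes of norm $3/2$. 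The shallow holes are the images of $\omega_2/2$, of norm $7/8$; their Delaunay cell is an $A_7$-simplex, parallel to the norm-$8/9$ holes of $E_8$.

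Now pair these points with $\pm r/2$ from the $A_1$ factor. Every resulting point has norm at most $1+1/2=3/2$, except (deep hole, $\pm r/2$), of norm $2$. Those satisfy $h\in Q^\vee$ and $2h\in Q$, exactly as you checked. With this correction your argument is complete and coincides with the paper's proof.
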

\begin{proof}
For an irreducible root lattice of rank $j$, the part of its Voronoi cell in a dominant Weyl chamber satisfies $(x,\theta)\le1$, where $\theta=\sum_{i=1}^j c_i\alpha_i$ is the highest root \cite[Ch.~21, Theorem~5]{MR1662447}. It is therefore contained in the simplex with vertices $0,\omega_i/c_i$, where $\omega_i$ are the fundamental weights. The images of these vertices under the Weyl group, and their
Cartesian products for direct sums, give the required finite set.

The nonzero $A_1$ vertex has square $1/2$. For $D_j$, the spinor vertices $\frac12(1,\ldots,1,\pm 1)$ have square $j/4$; all other vertices have square at most $\max(1,(j-2)/4)$. In the numbering with the $E_7$ chain $1,\ldots,6$ and vertex $7$ attached to $3$, the highest-root coefficients $(2,3,4,3,2,1,2)$ give one vertex $\omega_6$ of square $3/2$; all the others have square at most $1$.

The squared covering radius of $E_8$ is one \cite[Ch.~21, \S~3.D]{MR1662447}, so every Voronoi vertex of $E_8\oplus A_1$ has square at most $3/2$. For the other $4$ lattices, every product vertex has square at most $3/2$, except for vertices of square $2$ formed from a square-one $D_4$ vertex and two $A_1$ vertices, a $D_6$ spinor vertex and one $A_1$ vertex, a $D_8$ spinor vertex, or $\omega_6$ and one $A_1$ vertex. All these vertices lie in $Q^\vee$, and their doubles lie in $Q$: this holds for the $A_1$ vertices, the relevant $D_4$ vertices, the spinor weights of even $D_j$, and $\omega_6$ since $|A_{E_7}|=2$. Lemma~\ref{lem:voronoi} applies.
\end{proof}

\begin{lemma}\label{lem:exceptional}
Let $L$ be one of the lattices
\[
 U\oplus A_1(-1)^5,\qquad I_{1,5}(2).
\]
For every $v\in L$ with $v^2=2(n-1)\ge4$, there exists $b\in L$ with $b^2\ge0$ and $0<(v,b)\le n-1$.
\end{lemma}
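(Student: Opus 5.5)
The plan is to treat both lattices by the same reduction scheme as in Lemma~\ref{lem:voronoi}: normalize $v$ by isometries, and then exhibit an explicit isotropic or positive vector $b$ with small pairing. For $L=U\oplus A_1(-1)^5$ with $Q=A_1^5$ we cannot apply Lemma~\ref{lem:voronoi} directly. The Voronoi cell of $A_1^5$ is the cube $[-\tfrac12,\tfrac12]^5$ in coordinates where the roots are $\pm e_i$ with $\|e_i\|^2=2$, and its vertices $h=\tfrac12(\pm e_1,\ldots,\pm e_5)$ have $h^2=5/2$. So I would rerun the argument of Lemma~\ref{lem:voronoi} by hand.

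\textbf{The case $U\oplus A_1(-1)^5$.} Write $v=\alpha e+\beta f+z$ and reduce via Eichler translations and the swap $e\leftrightarrow f$, exactly as before, to $\beta\ge\alpha$ and $z/\alpha$ in the cube. If $\alpha\le n-1$ we take $b=f$. Otherwise $z=\sum z_ie_i$ with $|z_i|\le\alpha/2$, and $n-1=\alpha\beta-\sum z_i^2$. The bound $\sum z_i^2\le 5\alpha^2/4$ is now too weak to force $\beta=\alpha$ cheaply, so I would look for further test vectors. Natural candidates are the isotropic vectors $e+f+h'$ with $h'\in A_1(-1)^5$ of square $-2$, such as $\pm e_i$. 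More usefully, there are the vectors $b=2e+2f+\sum\epsilon_ie_i$ restricted to four coordinates, which have square $8-8=0$, or to three coordinates, which have square $2$. Their pairings with $v$ are
\[
2\alpha+2\beta-2\sum_{i\in I}\epsilon_iz_i .
\]
Choosing the signs $\epsilon_i=\operatorname{sign}(z_i)$ and $I$ the indices of the largest $|z_i|$, I would show that either one of these pairings lies in $(0,n-1]$, or the inequality $n-1<\alpha$ contradicts $n-1=\alpha\beta-\sum z_i^2$. The point is that the four largest $|z_i|$ dominate: $\sum z_i^2\le \alpha\sum_{i\in I}|z_i|/2+\alpha^2/4$. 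This case analysis on the sizes of the $|z_i|$ is routine but fiddly.

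\textbf{The case $I_{1,5}(2)$.} Here I would imitate Lemma~\ref{lem:fourpairs}. Write $v=(a;b_1,\ldots,b_5)$ with integer coordinates and $2(n-1)=2(a^2-\sum b_i^2)$. Normalize to $a>0$ and $b_1\ge\cdots\ge b_5\ge0$, and reflect in $u_1=(1;1,1,1,0,0)$, which has square $-4$ in the scaled form and pairs evenly with the lattice. This reaches a reduced vector with $a\ge b_1+b_2+b_3$; Vinberg's chamber for $I_{1,5}$ is finite with exactly these walls. Then test the isotropic vectors $(1;1,0,0,0,0)$ and $(2;1,1,1,1,0)$, whose pairings are $2(a-b_1)$ and $2(2a-b_1-b_2-b_3-b_4)$, together with the positive vector $(1;0,\ldots,0)$, whose pairing is $2a$. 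If all pairings exceed $n-1$, we get $a-b_1\ge n/2$, and combined with $a\ge b_1+b_2+b_3$ this yields
\[
a^2-\sum b_i^2\ge \ldots > n-1,
\]
a contradiction. As in Lemma~\ref{lem:fourpairs}, this can be packaged by appending a coordinate to land in a Vinberg cone $\mathcal C_6$ for $I_{1,6}$, which has finite volume, forcing $a^2-\sum b_i^2-(n/2)^2\ge0$ against $-(n-2)^2/4<0$.

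The main obstacle I expect is the first model. The missing Voronoi estimate must be replaced by the right finite list of test vectors, and checking that those vectors cover all configurations of $z$ is where the work lies.
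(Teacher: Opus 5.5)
\textbf{The $I_{1,5}(2)$ case.} Your plan here is the paper's. After reflecting in $(1;1,1,1,0,0)$, the isotropic vector $(1;1,0,0,0,0)$ already works. The estimate you left as ``$\ldots$'' is $n-1\ge p^2+2pb_2-b_2^2-3b_3^2\ge p^2$, where $p=a-b_1\ge b_2+b_3$ and $2pb_2-b_2^2-3b_3^2\ge(b_2-b_3)(b_2+3b_3)\ge0$. This gives $2p\le n-1$. The $\mathcal C_6$ packaging is unnecessary. As stated it would need $b_5\ge n/2$, which in Lemma~\ref{lem:fourpairs} came from a root and is unavailable when you require $b^2\ge0$.

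\textbf{The gap: $U\oplus A_1(-1)^5$.} Your normal form uses only Eichler translations and $e\leftrightarrow f$, and this is too coarse: the dichotomy you announce is false. Take $v=6e+8f+3(e_1+\cdots+e_5)$ with $e_i^2=-2$ in $L$. Then $v^2=96-90=6$, so $n-1=3<\alpha=6$, with $\beta\ge\alpha$ and $|z_i|=\alpha/2$. So $v$ is reduced in your sense. Your candidates pair with $v$ as follows:
\begin{itemize}
\item $f$ and $e$ give $6$ and $8$;
\item $e+f\pm e_i$ gives at least $8$, and $e+f$ gives $14$;
\item $2e+2f+\sum_{i\in I}e_i$ gives $4$ for $|I|=4$ and $10$ for $|I|=3$.
\end{itemize}
None of these lies in $(0,3]$, and $3=\alpha\beta-\sum z_i^2$ is compatible with $n-1<\alpha$. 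Your domination inequality gives exactly $n-1\ge\frac{\alpha\beta}{2}-\frac{3\alpha^2}{4}+\frac{\alpha m}{2}=3$ here, with $m=2$, so no contradiction arises. A valid vector does exist: $b=4e+5f+2\sum_i e_i$ has $b^2=0$ and $(v,b)=2$. But $b=s_\rho(f)$ for the root $\rho=2e+2f+\sum_{i=1}^5 e_i$ with $\rho^2=-2$, and it is not on your list.

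\textbf{The missing idea.} Add this reflection to the reduction, as the paper does. It arranges $p=\alpha+\beta-\sum_i|z_i|\ge0$: when $p<0$, $s_\rho$ lowers $\alpha+\beta$ by $8|p|$, so the process terminates. In the example $p=-1$, and one reflection gives $2e+4f+\sum e_i$, where $f$ works.

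You then also need a sharper estimate than yours. For $c_i=|z_i|$ and $t=\min_i c_i$, use $c_i^2\le(\frac{\alpha}{2}+t)c_i-\frac{\alpha t}{2}$. This keeps the lower bound $c_i\ge t$ that your inequality discards. Summing gives
\[ n-1\ge\frac{\alpha}{2}(p+t)+(\beta-\alpha)\Bigl(\frac{\alpha}{2}-t\Bigr)+pt\ge\frac{\alpha}{2}(p+t), \]
where the last step uses $p\ge0$. In your example $pt=-3$, and this term is exactly what breaks the bound. Hence $n-1<\alpha$ forces $p+t\le1$. Positivity of the pairing with a nonzero isotropic vector then gives $p+t=1$. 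Your four-coordinate isotropic vector omitting an index with $|z_i|=t$ therefore has pairing $2(p+t)=2\le n-1$.
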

\begin{proof}
For $U\oplus A_1(-1)^5$, write
\[
 v=\alpha e+\beta f+\sum_{i=1}^5 c_i r_i,\qquad r_i^2=-2.
\]
Eichler transformations, sign changes, and exchanges give $0<\alpha\le\beta$ and $0\le c_i\le\alpha/2$. Also arrange
\[
 p=\alpha+\beta-\sum_{i=1}^5 c_i\ge0
\]
by reflecting in $2e+2f+\sum_{i=1}^5 r_i$, a root. If $p<0$, this replaces $\alpha,\beta$ by $\alpha+4p,\beta+4p$ and decreases their positive sum; Reducing each $c_i$ modulo $\alpha$ to $[-\alpha/2,\alpha/2]$ by Eichler translations keeps $\alpha$ fixed and does not increase $\beta$. Hence the process terminates.

If $\alpha\le n-1$, use $f$. Otherwise, let $t=\min_i c_i$. Since $t\le c_i\le\alpha/2$,
\[
 c_i^2\le\left(\frac{\alpha}{2}+t\right)c_i-\frac{\alpha t}{2}.
\]
Summing and using $\sum_{i=1}^5 c_i=\alpha+\beta-p$ gives
\[
 n-1\ge\frac{\alpha}{2}(p+t)
 +(\beta-\alpha)\left(\frac{\alpha}{2}-t\right)+pt
 \ge\frac{\alpha}{2}(p+t).
\]
As $n-1<\alpha$, the nonnegative integer $p+t$ is at most $1$. If $c_{i_0}=t$, then $b=2e+2f+\sum_{\substack{i=1\\i\ne i_0}}^5 r_i$ is a nonzero isotropic vector with $(v,b)=2(p+t)$. The orthogonal complement of the positive vector $v$ is negative definite, so $p+t=1$ and $(v,b)=2\le n-1$.

Finally take $I_{1,5}(2)$ with integral coordinates $v=(a;b_1,\ldots,b_5)$, $a>0$, and $b_1\ge\cdots\ge b_5\ge0$. Reflect in $(1;1,1,1,0,0)$ to arrange $a\ge b_1+b_2+b_3$. Let $p=a-b_1>0$, so $p\ge b_2+b_3$. Then
\[
 n-1\ge p^2+2pb_2-b_2^2-3b_3^2\ge p^2,
\]
because $2pb_2-b_2^2-3b_3^2\ge(b_2-b_3)(b_2+3b_3)\ge0$. The isotropic vector $u=(1;1,0,\ldots,0)$ satisfies $(v,u)=2p\le n-1$: for $p=1$ this follows from $n-1\ge2$, while for $p\ge2$ we have $2p\le p^2\le n-1$.
\end{proof}

\subsection{Order Four And Eight Cases}\label{sec:orderfour}\label{sec:remaining}

\begin{proposition}\label{prop:orderfour}
If $(N,v)$ has no short vector, then $\ord(F)\ne4$.
\end{proposition}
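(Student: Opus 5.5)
Suppose $\ord(F)=4$ and $(N,v)$ has no short vector. Lemma~\ref{lem:E8} gives $H_0=F^2$ with $C=S_{H_0}(\M)\simeq E_8(-2)$, and Proposition~\ref{prop:neighbor} gives the unimodular lattice $K\simeq\mathrm{II}_{4,12}$ with $g=F|_K$ of order $\ord(F)/2=2$. Thus $g$ is itself an involution of $K$. Moreover $h=g^{\ord(F)/4}=g$ satisfies $\im(h-1)=\ker(h-1)=K/R$ on $A_R$. The plan is to apply Lemma~\ref{lem:fourspace} to $h=g$, then Theorem~\ref{thm:nonnegative} to $K_-=S_g(K)$, and finally Lemma~\ref{lem:lift}, which produces a short vector and hence a contradiction.

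First I place $v$ inside $K_-$. We have $v\in R\subset K$ because $H_0(v)=F^2(v)=v$ and $v\perp C$; the latter holds since $C\subset N\cap v^\perp$, as in the proof of Lemma~\ref{lem:E8}. Since $g(v)=F(v)=-v$, we get $v\in K^{-g}=K_-$. The next task is to determine the signature of $K_-$. $F$ fixes a positive three-space $P_3$ pointwise, and $P_3\perp C$ because $C$ is the coinvariant lattice of $H_0$ and $P_3$ is $H_0$-fixed. Hence $P_3\subset R_\R=K_\R$ lies in the $g$-fixed part, so $K_+$ contains a positive three-space. Since $K$ has signature $(4,12)$ and $v^2>0$ with $v\in K_-$, the lattice $K_-$ has signature $(1,\rk K_- -1)$, so it is hyperbolic. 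Unimodular gluing in $K$ makes $K_-$ even and $2$-elementary, and Lemma~\ref{lem:fourspace} gives $\rk K_-+\ell(A_{K_-})\le12$.

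Theorem~\ref{thm:nonnegative} now applies to $L=K_-$ and the vector $v\in K_-$ with $v^2=2(n-1)\ge4$. Primitivity of $v$ is not required there. It yields $b\in K_-$ with $b^2\ge0$ and $0<(b,v)\le n-1$. Since $K_-=K^{-g}\subset S_g(K)$, this $b$ satisfies \eqref{eq:P}. Lemma~\ref{lem:lift} then gives a short vector for $(N,v)$, which contradicts the hypothesis, so $\ord(F)\ne4$.

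The main obstacle I expect is the bookkeeping behind these identifications. One check is that $S_g(K)=K^{-g}$ for an involution; this holds rationally, and $K^{-g}$ is primitive in $K$, so they agree. Another is that $v$ really lies in $R$. A third is that the positive directions sit correctly: $P_3$ must be in $K_+$, so that $K_-$ is hyperbolic rather than of larger positive index. All of these follow from the facts that $C$ is negative definite and that $F$ fixes $P_3$. Once they are checked, the order-four case is essentially immediate from the machinery already set up, in particular the bound $\rk K_-+\ell(A_{K_-})\le12$ that feeds into Theorem~\ref{thm:nonnegative}.
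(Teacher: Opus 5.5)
Your proof is correct and follows the paper's argument exactly: for $\ord(F)=4$ the induced $g$ on $K$ is an involution, Lemma~\ref{lem:fourspace} gives $\rk S_g(K)+\ell(A_{S_g(K)})\le12$, Theorem~\ref{thm:nonnegative} produces \eqref{eq:P}, and Lemma~\ref{lem:lift} gives the contradiction. Your additional checks are exactly the details the paper leaves implicit: $v\in R\subset K$, $v\in K^{-g}$, and that the fixed positive three-space lies in $K^g$, which makes $K^{-g}$ hyperbolic.
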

\begin{proof}
If $\ord(F)=4$, then $g$ is an involution and $S_g(K)=K^{-g}$ is even, hyperbolic, and $2$-elementary. Lemma~\ref{lem:fourspace} gives $\rk S_g(K)+\ell(A_{S_g(K)})\le12$. Theorem~\ref{thm:nonnegative} supplies \eqref{eq:P}, a contradiction to Lemma~\ref{lem:lift}.
\end{proof}

It remains to exclude $\ord(F)=8$.

\begin{lemma}\label{lem:shortlist}
In the setting of Proposition~\ref{prop:neighbor}, suppose $\ord(F)=8$, so $\ord(g)=4$. Let $C'=K^{-g^2}$, and write $q=\rk C'$ and $a=\ell(A_{C'})$.
Then $C'\subset S_g(K)\cap v^\perp$ is negative definite,
\[
 q-a\ge4,\qquad q+a\le12,
\]
and the quotient map $C'\to K/R$ is surjective and annihilates $2(C')^\vee$. Moreover,
\[
 C'(-1)\simeq D_6,\ D_{10},\ \text{or }D_4^2.
\]
\end{lemma}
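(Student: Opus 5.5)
The plan is to read off the first batch of assertions from Lemma~\ref{lem:fourspace} applied to the involution $h=g^2$, and then to cut down the possible $C'$ using the standing assumption that $(N,v)$ has no short vector. By Proposition~\ref{prop:neighbor}, $h=g^{\ord(F)/4}=g^2$ preserves $R$ and satisfies $\im(h-1)=\ker(h-1)=K/R$ on $A_R$. Lemma~\ref{lem:fourspace} with $K_-=C'$ then gives $q-a\ge4$, $q+a\le12$, surjectivity of $C'\to K/R$, and $2(C')^\vee\subset C'\cap R$. It also shows that the induced kernel $\overline{C'\cap R}\subset C'/2(C')^\vee$ is nondegenerate of dimension $q-a-4$.

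For the geometric statements, note that $g^2=-1$ on $C'$, so $C'$ has no nonzero $g$-fixed vector, and hence $C'\subset S_g(K)$. The vector $v$ lies in $R\subset K$ because $H_0=F^4$ fixes $v$. Since $g(v)=-v$, we get $h(v)=v$, so $v\in K_+$ and $C'\perp v$. The positive three-space fixed by $F$ lies in $R_\R$, since it is $H_0$-fixed. It is $h$-fixed, and together with $v$ it spans a positive four-space inside $(K_+)_\R$. As $K$ has signature $(4,12)$, $C'$ is negative definite. Moreover, $g|_{C'}$ squares to $-1$, so $q$ is even and $C'(-1)$ carries a fixed-point-free complex structure.

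For the classification, $C'(-1)$ is a positive-definite even $2$-elementary lattice with $q$ even, $q\equiv a\pmod 2$, $q-a\ge4$ and $q+a\le12$. Nikulin's existence conditions \cite[Theorem~3.6.2]{MR525944}, together with the signature congruence for the discriminant form, leave a short list of genera. These are $(6,2)$ containing $D_6$; $(8,0)$, namely $E_8$; $(8,2)$, containing $D_8$ and $E_7\oplus A_1$; $(8,4)$, containing $D_4^2$ and $E_7$-type variants such as $D_6\oplus A_1^2$; and $(10,2)$, containing $D_{10}$ and $E_8\oplus A_1^2$. In the definite small-rank range, each genus is enumerated by root systems. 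The key extra constraint is the following: a vector of square $-2$ in $C'\cap R$ lies in $R\subset\M$, is orthogonal to $v$, and is $F$-anti-invariant up to the action of $C$. Hence it lies in $N$ and is a short vector with $(a,v)=0$. Therefore $C'\cap R$ must be root-free, while containing $2(C')^\vee$ with index $2^{4}$ in $C'$ and having nondegenerate image modulo $2(C')^\vee$.

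This gives a finite check on the $\F_2$-space $C'/2(C')^\vee$. We must find a subspace of dimension $q-a-4$ that is nondegenerate for $(x,y)\bmod 2$, is $g$-stable, and whose nonzero classes contain no roots. We also need the quadratic refinement $x^2/2\bmod 2$, which vanishes on root-free classes represented by vectors of square $4$. For $E_8$, such a subspace has dimension $4$ and all its nonzero classes are norm-$4$ classes, so it is totally singular. A $4$-dimensional totally singular subspace of $E_8/2E_8$ is totally isotropic for the bilinear form, contradicting nondegeneracy. The same quadratic-form argument should exclude $E_7\oplus A_1$, $D_8$, $E_8\oplus A_1^2$ and the remaining non-listed members. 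In each of these, either the required subspace is too large relative to the root-free (singular) classes, or $g$-stability of a complex structure forces roots into it. The survivors are $D_6$, where the kernel is $0$; $D_4^2$, where the kernel is also $0$; and $D_{10}$, where $q-a-4=4$.

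The main obstacle is this last elimination step. It requires the correct enumeration of genera and their classes, followed by the $\F_2$-quadratic-space bookkeeping case by case. I would first try the uniform argument that root-free classes are singular, and fall back on the complex-structure constraint from $g$ only where that argument is insufficient. The candidates $D_8$ and $(8,4)$ lattices other than $D_4^2$ are where I expect this to be needed.
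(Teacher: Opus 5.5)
Your derivation of the first assertions is correct and is the paper's own: Lemma~\ref{lem:fourspace} applied to $h=g^2$, together with the fixed positive four-space. That covers negative definiteness, $C'\subset S_g(K)\cap v^\perp$, $q-a\ge4$, $q+a\le12$, surjectivity onto $K/R$, and $2(C')^\vee\subset C'\cap R$ with $C'\cap R$ root-free.

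The gap is the classification $C'(-1)\simeq D_6,D_{10},D_4^2$. You leave it at ``should exclude'' and ``I expect'', so as written it is not proved, and your proposed fallback would not work.

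\emph{First missing idea.} You only require the \emph{nonzero} classes of $H=(C'\cap R)/2(C')^\vee$ to be root-free. But the zero class $2(C')^\vee$ also lies in $C'\cap R$, so $C'$ can have no root $r$ with $r/2\in(C')^\vee$. This observation alone excludes $E_7\oplus A_1$, $E_8\oplus A_1^2$ and $D_6\oplus A_1^2$. For $(q,a)=(8,4)$ you have $\dim H=0$, so your $\F_2$ check is vacuous there. The complex-structure constraint does not rescue it either, since $D_6\oplus A_1^2$ carries isometries squaring to $-1$.

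\emph{Second missing idea.} The step ``root-free classes are singular'' needs every class with $Q=1$ to contain a root \emph{of $C'$}. You justify this only for $E_8$. For $D_8$ you give no argument. For candidates such as $E_7\oplus A_1$, whose discriminant form takes non-integral values, $Q$ is not even well defined on $C'/2(C')^\vee$.

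The paper handles all of rank $8$ at once. It glues $E=C'(-1)$ along an isotropic subgroup of order $2^{a/2}$ to an overlattice $\Lambda\simeq E_8$, so that $2\Lambda\subset2E^\vee\subset E\cap R$. By the covering radius of $E_8$, every class of $\Lambda/2\Lambda$ with $Q=1$ has a root representative, and that root would lie in $E\cap R$. Hence $Q$ vanishes on $(E\cap R)/2\Lambda$. In particular it vanishes on $Z=2E^\vee/2\Lambda$, descends to $E/2E^\vee$, and vanishes on $H$. Nondegeneracy then forces $H=0$, i.e.\ $a=4$, with $Z$ a totally singular plane. Witt's lemma, plus the fact that root reflections realize $O(\Lambda/2\Lambda,Q)$, gives $E\simeq D_4^2$ without enumerating any genus.

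Your claim that each genus ``is enumerated by root systems'' is also unsupported. The paper settles $(6,2)$ by gluing with $A_1^2$ into $E_8$, where the complement of two orthogonal roots is $D_6$. It settles $(10,2)$ by gluing with $D_6$ into $E_8^2$ or $D_{16}^+$, whose $D_6$-complements are $E_8\oplus A_1^2$ and $D_{10}$. The former is then excluded by its divisibility-two roots.
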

\begin{proof}
The assertions about signature and orthogonality follow from $g^2(v)=v$ and the fixed positive three-space. The involution $g^2$ is the one in Proposition~\ref{prop:neighbor}. Lemma~\ref{lem:fourspace} gives the inequalities and the quotient map. Any root in $C'\cap R\subset N\cap v^\perp$ would satisfy \eqref{eq:W}, so $C'\cap R$ is rootless. Since the quotient map annihilates $2(C')^\vee$, a root of divisibility two would belong to $C'\cap R$. Thus $C'$ has no such roots.

Since $g^2=-1$ on $C'$, its rank $q$ is even.
As $C'$ is even and $2$-elementary, $q-a$ is even, so $a$ is also even. Combining the two inequalities with the existence conditions for even $2$-elementary forms \cite{MR525944} and \cite[Theorem 3.6.3]{MR3378829} gives
\[
 (q,a)=(6,2),(8,0),(8,2),(8,4),(10,2).
\]

Set $E=C'(-1)$ and identify their underlying groups. For $(q,a)=(6,2)$, the finite quadratic form is $\langle-1/2\rangle^2$. Gluing with $A_1^2$ gives $E_8$. Orthogonal pairs of roots in $E_8$ are conjugate and have complement $D_6$, so $E=D_6$.

Since $E$ is positive definite of rank $8$, its discriminant form has Brown invariant (signature modulo $8$) equal to $0$ \cite[Theorem~1.3.3]{MR525944}. Finite quadratic-form classification \cite[Theorem~3.6.3]{MR3378829} gives a totally isotropic subgroup of order $2^{a/2}$; gluing along it produces an even unimodular overlattice $\Lambda\simeq E_8$ of index $2^{a/2}$. Since $E$ is $2$-elementary, $2\Lambda\subset E$. Let $V=\Lambda/2\Lambda$ with $Q(x)=x^2/2\pmod{2}$, and let $Z=(E/2\Lambda)^\perp$.
Then $\dim Z=a/2$, and
\[
 Z=2E^\vee/2\Lambda\subset Z^\perp=E/2\Lambda,
 \qquad E/2E^\vee\simeq Z^\perp/Z.
\]
By Lemma~\ref{lem:fourspace},
\[
 2\Lambda\subset2E^\vee\subset E\cap R.
\]
Every class of $V$ with $Q=1$ has a root representative: the squared covering radius of $E_8$ is $1$, so each class has a representative of square at most $4$. Since $E\cap R$ is rootless and contains $2\Lambda$, $Q$ vanishes on $(E\cap R)/2\Lambda$. In particular, it vanishes on $Z$ and descends to $E/2E^\vee$. Its restriction to
\[
 H=(E\cap R)/2E^\vee=\ker(E/2E^\vee\to K/R)
\]
is therefore zero. Lemma~\ref{lem:fourspace} makes the polarization on $H$ nondegenerate, so $H=0$. Since $\dim H=4-a$, we obtain $a=4$ and $Z$ is a totally singular plane. Root reflections realize $O(V,Q)$ \cite{MR1662447}; by Witt's lemma, $Z$ is equivalent to the plane spanned by $2e_1$ and $e_1+e_2+e_3+e_4$ in standard coordinates. The orthogonal preimage $E\subset\Lambda$ is $D_4\oplus D_4$: orthogonality to $2e_1$ removes the half-integral vectors, while orthogonality to $e_1+e_2+e_3+e_4$ imposes even sums in each four-coordinate block.

For $(q,a)=(10,2)$, the discriminant form is $\langle1/2\rangle^2$. Gluing with $D_6$ produces an even unimodular rank-sixteen lattice, either $E_8^2$ or $D_{16}^+$ \cite{MR1662447}. In the former, the connected $D_6$ root system lies in one $E_8$ factor and has complement $E_8\oplus A_1^2$. In the latter case, after permuting coordinates, the roots of this $D_6$ subsystem are $\pm e_i\pm e_j$ with $1\le i<j\le6$, where $e_1,\ldots,e_{16}$ is the standard orthonormal basis. Orthogonality forces these coordinates to vanish, which excludes every vector of $D_{16}^+\setminus D_{16}$, whose coordinates all lie in $\Z+\tfrac12$. The remaining $10$ coordinates
are integers with even sum, so the complement is $D_{10}$.
\end{proof}

\begin{proposition}\label{prop:compatible}
Let $K$ be even and unimodular of signature $(4,12)$. Let $g\in O(K)$ have order $4$ and fix a positive-definite three-space pointwise. If $K^{-g^2}\simeq D_6(-1)$, then every $v\in K$ with $g(v)=-v$ and $v^2=2(n-1)\ge4$ admits $b\in S_g(K)$ with $b^2\ge0$ and $0<(v,b)\le n-1$.
\end{proposition}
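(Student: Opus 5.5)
The plan is to split $K$ along the involution $g^2$ and reduce to a hyperbolic lattice small enough for Theorem~\ref{thm:nonnegative} or Lemma~\ref{lem:voronoi}, with the $D_6$ part used only to absorb glue via its covering radius, as in Lemma~\ref{lem:lift}.

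\emph{Setup.} Put $K_-=K^{-g^2}\simeq D_6(-1)$ and $K_+=K^{g^2}$.
\begin{itemize}
\item Unimodular gluing makes $K_+$ even and $2$-elementary, of signature $(4,6)$, with $A_{K_+}\simeq A_{D_6}\simeq(\Z/2\Z)^2$ and $q_{K_+}=-q_{D_6}$.
\item On $K_+$, $g$ is an involution. Its fixed part $K_+^g$ contains the fixed positive three-space.
\item Hence $M=K_+^{-g}$ is hyperbolic and contains $v$.
\item Since $g$ has no fixed vectors on $K_-$, we get $S_g(K)\otimes\Q=(K_-\oplus M)\otimes\Q$.
\item $S_g(K)$ is an overlattice of $K_-\oplus M$, glued along a subgroup of $A_{D_6}\times A_M$.
\end{itemize}

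\emph{Reduction to the $M$-side.} Let $P\subset M^\vee$ be the orthogonal projection of $S_g(K)$ to $M\otimes\Q$. For $m\in P$, the fibre of $S_g(K)$ over $m$ is a coset $d+K_-$ with $d\in K_-^\vee$. By the covering radii of $D_6$, I choose $d$ with $d^2\ge-1$ on the vector class and $d^2\ge-3/2$ on the spinor classes, and $d=0$ on the trivial class. Since $(v,d)=0$, it suffices to find $m\in P$ with:
\begin{itemize}
\item $0<(v,m)\le n-1$;
\item $m^2\ge-d^2$, i.e.\ $m^2\ge0$, $1$ or $3/2$ according to the $D_6$-glue class of $m$.
\end{itemize}
If the glue is trivial on the relevant classes, this is simply the statement for $M$ with $b^2\ge0$.

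\emph{Identifying $M$ and its glue.} The next step is to pin down $M$. The pair $(K_+,g)$ is a $2$-elementary lattice with an involution, so $M$ and $K_+^g$ have $2$-elementary-type discriminants related through $A_{K_+}$. The same subspace argument as in Lemma~\ref{lem:fourspace}, applied to $K_+$ with its length-two discriminant, should bound $\rk M+\ell(A_M)$. I expect $\rk M\le7$ and $\rk M+\ell(A_M)$ close to $12$. One would then list the possible $(\rk M,\ell(A_M),\delta)$ via \cite[Theorem~3.6.2]{MR525944}, exactly as in the proof of Theorem~\ref{thm:nonnegative}.
\begin{itemize}
\item In cases where $M$ itself is $2$-elementary with $\rk M+\ell(A_M)\le12$, Theorem~\ref{thm:nonnegative} gives $b\in M\subset S_g(K)$ directly.
\item In the remaining cases, $M$ has the form $U\oplus Q(-1)$ or $U(2)\oplus Q(-1)$. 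There I would rerun the Eichler-translation and Voronoi argument of Lemma~\ref{lem:voronoi}, now in $P$ rather than $M$. The isotropic vectors $f$ or $2e+2f+2h$ used there lie in $M$ with trivial glue, so they need only $b^2\ge0$.
\end{itemize}

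\emph{Main obstacle.} The hardest step is this identification. One must control how $g$ acts on $A_{D_6}$ (it may swap the two spinor classes) and how this constrains the gluing between $K_-$ and $M$. This is what determines whether $P$ is genuinely larger than $M$ and whether nonzero glue classes can occur. I would first try to show, using $g^2=-1$ on $K_-$ and the compatibility of $g$ with $\gamma$, that the classes meeting $S_g(K)$ nontrivially are only the $g$-invariant vector class. This would require only $m^2\ge1$, which the Voronoi argument with $h^2\le3/2$ can provide.
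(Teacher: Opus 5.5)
Your setup and the reduction to the $M$-side are sound. The gap is that the proposal stops exactly where the content of the proposition lies: the identification of $M=K^{g^2}\cap K^{-g}$ and of $P$ is only announced (``I expect'', ``should bound'', ``I would rerun''), and the tools you plan to use for it do not apply. Lemma~\ref{lem:fourspace} uses unimodularity of the ambient lattice to make eigenlattices $2$-elementary, and $K_+=K^{g^2}$ is not unimodular. More seriously, $M$ is not $2$-elementary, so neither Nikulin's classification of even $2$-elementary lattices nor Theorem~\ref{thm:nonnegative} can be applied to it. For example, take $K=U^4\oplus E_8(-1)$. Let $g$ be $-1$ on one copy of $U$ and the identity on the other three, and let $g$ act on $E_8(-1)\subset\frac12\Z^8$ by $(x_1,\dots,x_8)\mapsto(-x_2,x_1,-x_4,x_3,-x_6,x_5,x_7,-x_8)$. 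Then $K^{-g^2}=D_6(-1)$, while $M=U\oplus\langle-4\rangle$. In fact $A_M$ always contains an element of order $4$ here: the vector class of $A_{D_6}$ is glued to some $0\ne\mu\in A_M$, and $2$-elementarity of the lattice $L$ below forces $\mu\in 2A_M$. So your first bullet never occurs. Your second bullet is then the whole proof, and it is not carried out: it asks for an Eichler/Voronoi argument in the odd lattice $P$, with the stronger requirement $m^2\ge1$ on the nontrivial coset, for a list of lattices $M$ you have not determined.

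The missing idea, which is what the paper uses, is to avoid $M$ altogether by changing the involution. Your guess about the glue is correct. $g|_{D_6}$ is a signed permutation made of three $2$-cycles, each with negative sign product, so it swaps the spinor classes and fixes the vector class. Since $g=-1$ on $A_M$, a spinor class $s$ cannot be glued: the glue map would have to send $s+gs$, which is the vector class, to $0$. Now let $\tau$ be the sign change of the first coordinate of $D_6(-1)$. It acts on $A_{D_6}$ exactly as $g$ does, so $h=(g|_{K^{g^2}})\oplus\tau$ glues to an involution of $K$. Then $L=K^{-h}$ has the following properties:
\begin{itemize}
\item it is $2$-elementary, being an eigenlattice of an involution of a unimodular lattice;
\item it is hyperbolic and contains $v$;
\item it lies in $S_g(K)$, because $K^g\subset K^h$.
\end{itemize}
The sublattice $A_4(-1)\subset K^h$ on coordinates $2,\dots,6$ has odd determinant. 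Hence the Gram matrix of $K^h$ has rank at least $4$ modulo $2$, which gives $\rk L+\ell(A_L)\le12$, and Theorem~\ref{thm:nonnegative} applies to $L$ directly. This $L$ is exactly the lattice generated by $M$, $2e_1$ and your lifts $m+e_1$ of the nontrivial coset of $P$; in the example above, $L=U\oplus A_1(-1)^2$. So your choice $d=\pm e_1$ was pointing at the right object. What you were missing is that this lattice is the $(-1)$-eigenlattice of an involution of $K$, which is what makes it $2$-elementary with the invariants Theorem~\ref{thm:nonnegative} needs.
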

\begin{proof}
Let $C'=K^{-g^2}$ and $R_1=(C')^\perp_K$. Every isometry of $D_6$ is a signed permutation. Since $(g|_{C'})^2=-1$, its permutation consists of $3$ two-cycles, each with negative sign product. Thus $g$ exchanges the two spinor classes of $A_{C'}$ and hence those of $A_{R_1}$.

Let $\tau$ be the sign change of the first coordinate in $D_6(-1)$. It exchanges the spinor classes $\tfrac12(1,1,1,1,1,\pm 1)+C'$ and fixes their sum, so $\tau$ and $g|_{C'}$ induce the same action on $A_{C'}$. Since $g^2=1$ on $R_1$, unimodular gluing extends $h=(g|_{R_1})\oplus\tau$ to an involution of $K$. Let $L=K^{-h}$, $r=\rk L$, and $a=\ell(A_L)=\ell(A_{K^h})$. Unimodular gluing makes both eigensublattices even and $2$-elementary.

The fixed positive three-space lies in $K^h$. Moreover $v\in L$, while $K^g\subset K^h$ because $g$ has no fixed vectors in $C'$. Hence $L$ is hyperbolic and $L\subset S_g(K)$.

Inside $D_6(-1)$, the coordinate vectors supported on positions $2,\ldots,6$ and summing to zero form $A_4(-1)\subset K^h$. Its determinant is $5$, so the Gram matrix of $K^h$ has rank at least $4$ modulo $2$. Since $K^h$ is $2$-elementary, this rank equals $\rk(K^h)-a=16-r-a$. Thus $r+a\le12$. Theorem~\ref{thm:nonnegative} applied to $v\in L$ gives $b\in L\subset S_g(K)$ satisfying \eqref{eq:P}.
\end{proof}

\begin{proposition}\label{prop:remaining}
If $(N,v)$ has no short vector, then $\ord(F)\ne8$.
\end{proposition}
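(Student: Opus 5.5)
We argue by contradiction. Suppose $(N,v)$ has no short vector and $\ord(F)=8$. By Proposition~\ref{prop:neighbor} the induced isometry $g=F|_K$ has order $4$, and $K\simeq\mathrm{II}_{4,12}$. We also have $g(v)=-v$: indeed $v\in R\subset K$ and $F(v)=-v$, and $g$ fixes the positive three-space pointwise. Lemma~\ref{lem:shortlist} then says $C'=K^{-g^2}$ satisfies $C'(-1)\simeq D_6$, $D_{10}$ or $D_4^2$. By Lemma~\ref{lem:lift}, it suffices in each case to produce a vector $b$ satisfying \eqref{eq:P}, that is $b\in S_g(K)$, $b^2\ge0$, $0<(b,v)\le n-1$. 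This gives the contradiction.

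\emph{Case $D_6$.} This is exactly Proposition~\ref{prop:compatible}, applied to $g\in O(K)$ and $v$.

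\emph{Cases $D_{10}$ and $D_4^2$.} We imitate Proposition~\ref{prop:compatible}. The goal is an auxiliary involution $h$ of $K$ with four properties: it agrees with $g$ on $R_1=(C')^\perp_K$ (where $g^2=1$); it acts on $C'$ by an involution $\tau$ inducing the same map on $A_{C'}$ as $g|_{C'}$; the fixed lattice $K^h$ contains a sublattice of odd determinant and rank at least $4$; and $v\in K^{-h}$. If such an $h$ exists, then $L=K^{-h}$ is even, hyperbolic and $2$-elementary, and it lies in $S_g(K)$ because $K^g\subset K^h$. The mod-$2$ rank argument then gives $\rk L+\ell(A_L)\le12$, and Theorem~\ref{thm:nonnegative} yields $b$.

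For $D_{10}(-1)$, we use that $g|_{C'}$ is a signed permutation with square $-1$. So it is five $2$-cycles with negative sign product, and it swaps the two spinor classes. We take $\tau$ to be a single sign change. It also swaps the spinor classes and fixes the vector class, and its fixed part contains $A_8(-1)$ on the remaining nine coordinates (determinant $9$). Hence $16-r-a\ge8$, which is more than enough.

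For $D_4^2(-1)$, we have $A_{C'}\cong(\Z/2)^4$, and $g|_{C'}$ either swaps the two $D_4$ factors or preserves each. In the swapping case we take $\tau$ to be the swap composed with a suitable isometry of one factor, chosen to match the induced action on the discriminant. Its fixed lattice is then a diagonal $D_4(-2)$-type lattice, and we must check that the fixed part contains an odd-determinant rank-$4$ piece, or else argue the rank bound via Lemma~\ref{lem:fourspace} directly. In the preserving case, on each factor $g$ induces an order-$\le2$ element of $O(A_{D_4})\cong S_3$. We realize it by a reflection or sign change in $W(F_4)$, and on a factor where the induced action is trivial we use $\tau=-1$.

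The main obstacle is the $D_4^2$ case. There the fixed lattice of $\tau$ can be too small or $2$-adically even, so the determinant trick giving $r+a\le12$ may fail. If it does, we fall back on Lemma~\ref{lem:fourspace} and the surjection $C'\to K/R$ from Lemma~\ref{lem:shortlist} (with $a=4$, so $C'\cap R=2(C')^\vee$). The plan there is to build $b$ directly as an isotropic vector of $S_g(K)$ and bound $(b,v)$ by a Voronoi/Eichler reduction in the style of Lemma~\ref{lem:voronoi}.
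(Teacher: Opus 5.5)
\textbf{There is a genuine gap: the $D_4(-1)^2$ case is left as a plan, not a proof, and the plan does not close as stated.} Your $D_6$ case is the paper's. Your $D_{10}$ case is a valid alternative: a single sign change $\tau$ induces the same map on $A_{C'}$ as $g|_{C'}$, and $A_8(-1)\subset K^h$ gives $r+a\le 8$.

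\emph{Swapping subcase.} Any involution $\tau$ of $D_4(-1)^2$ inducing the swap on $A_{C'}$ must itself swap the two factors, because every root lies in a single factor. So $\tau(x,y)=(\chi^{-1}y,\chi x)$, and $(C')^\tau=\{(x,\chi x)\}\cong D_4(-2)$ has Gram matrix $\equiv 0\pmod 2$. It contributes nothing to the mod-$2$ rank of $K^h$. You give no argument that the gluing with $R_1^g$ supplies the missing rank $4$.

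\emph{Preserving subcase.} Your choice $\tau=-1$ on a factor with trivial induced action likewise discards that factor. Taking $\tau=+1$ there, which keeps $A_2(-1)\subset D_4(-1)$, would have worked better.

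\emph{Fallback.} The Voronoi/Eichler fallback is not carried out at all.

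\textbf{The missing idea.} For both $D_{10}(-1)$ and $D_4(-1)^2$ one has $q+a=12$. Apply Lemma~\ref{lem:fourspace} to the \emph{plus} eigenlattice of $g^2$: it gives $\rk K^{g^2}-\ell(A_{K^{g^2}})=16-q-a=4$, hence $K^{g^2}\cap R=2(K^{g^2})^\vee$. You applied the last clause of that lemma to $C'$ instead, which is true for $D_4^2$ but useless, since $v\notin C'$.

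Since $v\in K^{g^2}\cap R$, we get $v/2\in(K^{g^2})^\vee$. Unimodular gluing then gives $c\in(C')^\vee$ with $b=v/2+c\in K$. Choosing $c$ of maximal square in its class gives $c^2\ge-2$ for $D_4(-1)^2$ and $c^2\ge-5/2$ for $D_{10}(-1)$. Then $b\in S_g(K)$, $(b,v)=n-1$, and $b^2\ge (n-1)/2-5/2\ge-3/2$. Evenness of $K$ forces $b^2\ge0$, which is \eqref{eq:P}. This handles both remaining cases uniformly, with no auxiliary involution.
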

\begin{proof}
Suppose $\ord(F)=8$, and use the notation of Lemma~\ref{lem:shortlist}. If $C'=D_6(-1)$, Proposition~\ref{prop:compatible} gives \eqref{eq:P}, a contradiction to Lemma~\ref{lem:lift}.

The remaining possibilities are $C'=D_{10}(-1)$ and $D_4(-1)^2$, both with $q+a=12$. Lemma~\ref{lem:fourspace} gives $\rk(K^{g^2})-\ell(A_{K^{g^2}})=16-q-a=4$ and therefore $K^{g^2}\cap R=2(K^{g^2})^\vee$.
Since $v\in K^{g^2}\cap R$, we have $v/2\in(K^{g^2})^\vee$. Every discriminant class of $D_4(-1)^2$ has a representative of square at least $-2$; for $D_{10}(-1)$ the corresponding bound is $-5/2$, attained by the spinor classes. Choose a representative $c\in(C')^\vee$ corresponding to the class of $v/2$. Then $b=v/2+c\in K$ and $(b,v)=n-1$. Its square satisfies
\[
 b^2\ge\frac{n-1}{2}-\frac52\ge-\frac32>-2.
\]
Evenness of $K$ gives $b^2\ge0$. Both summands are perpendicular to $K^g$, so \eqref{eq:P} holds, a contradiction to Lemma~\ref{lem:lift}.
\end{proof}

\section{Proof of The Main Theorem}\label{sec:conclusion}
Let $X$ be a hyperk\"ahler manifold of $K3^{[n]}$-type, $n\ge2$, and choose a representative of its canonical Mukai embedding orbit, so that $H=H^2(X,\Z)=v^\perp\subset\M$ and $v^2=2(n-1)$. Write $\mathcal K_X$ for its K\"ahler cone.

\Needspace{7\baselineskip}
\begin{definition}
\label{def:numerical-wall}
For a primitive class $D\in\operatorname{Pic}(X)$ with $D^2<0$, put
\[
T_D=\M\cap\operatorname{span}_{\Q}\{v,D\}.
\]
The class $D$ is a numerical wall divisor if there exists $a\in T_D\setminus\Q v$ such that
\[
a^2\ge-2,\qquad 0\le(a,v)\le v^2/2.
\]
Equivalently, there exists $a\in T_D\setminus\Q v$ satisfying either
\[
a^2=-2,\quad 0\le(a,v)\le v^2/2,
\qquad\text{or}\qquad
0\le a^2v^2<(a,v)^2\le(v^2/2)^2.
\]
\end{definition}

The second condition is from \cite[Definition~9]{MR3473636}. Notice that \cite[Definition~9]{MR3473636} contains a typo: the last two inequality signs, $\le$ and $<$, should be interchanged.

The series work about wall classes in \cite{BayerMacri}, \cite[Theorem~1]{MR3377069}, \cite[Theorems~1.3 and 2.4]{MR3423735}, and \cite[Theorem~10]{MR3473636} tells that every numerical wall divisor $D$ satisfies $D^\perp\cap\mathcal K_X=\varnothing$.

\begin{proof}[Proof of Theorem~\ref{thm:main}]
Average a K\"ahler class to obtain a $G$-invariant class $\kappa\in\mathcal K_X$. The action on $A_H$ is automatically trivial for $n=1,2$. For $n\ge3$, suppose that some $f\in G$ acts nontrivially on $A_H$. Markman's monodromy description \cite[Lemma~9.2]{MR2964480} gives $f^*|_{A_H}=-\id$. Unimodular gluing in the canonical embedding \cite[Corollary~9.5(1)]{MR2964480} extends $f^*$ and $v\mapsto-v$ to a finite-order $F\in O(\M)$. Since $f$ is symplectic, $F$ fixes the positive three-space spanned by $\kappa,\mathrm{Re}\,\sigma,\mathrm{Im}\,\sigma$. Theorem~\ref{thm:lattice} therefore gives a short vector $a\in S_F(\M)$ for $v$; namely, after choosing its sign,
\[
a\notin\Q v,\qquad a^2\ge-2,\qquad 0\le(a,v)\le v^2/2.
\]

The lattice $S_f(H)$ is negative definite and algebraic, and $S_F(\M)_\Q=\Q v\oplus S_f(H)_\Q$. Thus the projection
\[
c=a-\frac{(a,v)}{v^2}v\in S_f(H)_\Q\setminus\{0\}
\]
is algebraic and has negative square. Choose a primitive class $D\in S_f(H)$ spanning $\Q c$. Moreover, $a\in T_D\setminus\Q v$. Definition~\ref{def:numerical-wall} shows that $D$ is a numerical wall divisor. But $\kappa$ is invariant, so $(D,\kappa)=0$. Hence
\[
\kappa\in D^\perp\cap\mathcal K_X,
\]
a contradiction to the wall-divisor property. Thus $G$ acts trivially on $A_H$.

For the second assertion, when $n\ge2$, extend the $G$-action to $\M$ fixing $v$, using its trivial action on $A_H$; the coinvariant lattice is unchanged. Unimodular gluing then gives triviality on $A_{S_G(H)}$, and for $n=1$ the same argument applies directly to the unimodular lattice $H$.
\end{proof}

\providecommand{\bysame}{\leavevmode\hbox to3em{\hrulefill}\thinspace}
\providecommand{\MR}{\relax\ifhmode\unskip\space\fi MR }
\providecommand{\MRhref}[2]{%
  \href{http://www.ams.org/mathscinet-getitem?mr=#1}{#2}
}

\Addresses
\end{document}